\theoremstyle{plain}
\newtheorem{theorem}{Theorem}
\numberwithin{theorem}{section}
\newtheorem{lemma}[theorem]{Lemma}
\newtheorem{proposition}[theorem]{Proposition}
\newtheorem{corollary}[theorem]{Corollary}
\newtheorem{conjecture}[theorem]{Conjecture}
\theoremstyle{definition}
\newcommand{\C}{{\mathbb C}}
\newcommand{\R}{{\mathbb R}}
\newcommand{\Z}{{\mathbb Z}}
\newcommand{\Q}{{\mathbb Q}}
\renewcommand{\P}{{\mathbb P}}
\newcommand{\s}{{\mathbb S}}
\newcommand{\I}{{\mathbb I}}
              \renewcommand{\L}{{\mathcal L}}
\begin{document}
\title{Seifert fibered four-manifolds with nonzero Seiberg-Witten invariant}
\author{Weimin Chen}
\subjclass[2000]{}
\keywords{}
\thanks{The author is partially supported by NSF grant DMS-1065784.}
\date{\today}
\maketitle

\begin{abstract}
The main result of this paper asserts that if a Seifert fibered $4$-manifold has nonzero
Seiberg-Witten invariant, the homotopy class of regular fibers has infinite order. This is a
nontrivial obstruction to smooth circle actions; as applications, we show how to destroy 
smooth circle actions on a $4$-manifold by knot surgery, without changing the integral 
homology, intersection form, and even the Seiberg-Witten invariant. Results concerning 
classification of Seifert fibered complex surfaces or symplectic $4$-manifolds are included. 
We also show that every smooth circle action on the $4$-torus is smoothly conjugate to a 
linear action.
\end{abstract}

\section{Introduction}

A $4$-manifold is called {\it Seifert fibered} if it admits a smooth, fixed-point free circle action.
As such, the $4$-manifold is given as the total space of a circle bundle over a $3$-dimensional orbifold, whose singular set consists of at most disjoint embedded circles. An oriented Seifert fibered $4$-manifold must have zero Euler number and signature, which seems to be the only known obstructions. For basic results concerning smooth circle actions on 
$4$-manifolds, we refer the reader to \cite{F1,F2}.

In this paper we derive new obstructions for Seifert fibrations on $4$-manifolds which have 
nontrivial Seiberg-Witten invariant. Recall that for an oriented $4$-manifold $M$ with 
$b^{+}_2>1$, the 
Seiberg-Witten invariant of $M$ is a map $SW_M$ that assigns to each $Spin^c$-structure $\L$ 
of $M$ an integer $SW_M(\L)$ which depends only on the diffeomorphism class of $M$ (cf. \cite{M}).  In the case of $b_2^{+}=1$, the definition of $SW_M(\L)$ requires an additional choice of orientation of the $1$-dimensional space $H^{2,+}(M,\R)$, as discussed in Taubes \cite{T2}. A convention throughout this paper is that when we say $M$ has nonzero Seiberg-Witten invariant in the case of $b_2^{+}=1$, it is meant that the map $SW_M$ has nonzero image in $\Z$ for {\it some} choice of orientation of $H^{2,+}(M,\R)$. Note that under this convention, every symplectic $4$-manifold has
nonzero Seiberg-Witten invariant by the work of Taubes \cite{T1}.

Baldridge in \cite{Bald2} showed how to compute the Seiberg-Witten invariant of a Seifert fibered 
$4$-manifold in terms of the base $3$-orbifold. We observe that, through a de-singularization 
formula of Seiberg-Witten invariants of $3$-orbifolds whose details are given in a separate 
paper \cite{C1}, the non-vanishing of Seiberg-Witten invariant yields topologically interesting
informations about the base $3$-orbifold. For the purpose here, we paraphrase the 
said de-singularization formula as follows: 

\vspace{2mm}
{\it Let $Y$ be a closed, oriented $3$-orbifold with $b_1>1$, and let $Y_0$ be the oriented 
$3$-orbifold obtained from $Y$ by removing an embedded circle from its singular set. Then
$Y$ has nonzero Seiberg-Witten invariant if and only if $Y_0$ does.
}
\vspace{2mm}

The center of a group $G$ will be denoted by $z(G)$.

\subsection{Topological constraints}

Among the new constraints of Seifert fibered $4$-manifolds discussed in this paper, the central 
result, as stated in the following theorem, asserts that the fundamental group of the $4$-manifold 
has infinite center. The class of Seifert fibered $4$-manifolds whose $\pi_1$ has infinite center is
further studied in \cite{C2}, where the main tools are from $3$-dimensional topology centered
around the recently proved Thurston's Geometrization Conjecture (cf. \cite{BLP, P}).

\begin{theorem}
Let $X$ be an oriented $4$-manifold with $b_2^{+}\geq 1$ and nonzero Seiberg-Witten invariant, 
and let $\pi:X\rightarrow Y$ be any Seifert fibration. Then (1) the homotopy class of a regular fiber 
of $\pi$, which lies in $z(\pi_1(X))$, has infinite order, and (2) if $b_2^{+}>1$, the Hurwitz
homomorphism $\pi_2(X)\rightarrow H_2(X)$ has finite image.
\end{theorem}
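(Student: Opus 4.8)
The plan is to deduce both statements from a single structural fact forced by the hypotheses: that the base $3$-orbifold $Y$ is aspherical. Recall first that, the circle action being fixed-point free, the homotopy class $f$ of a regular fiber lies in $z(\pi_1(X))$, as the orbit map $S^1\times X\to X$ exhibits $f$ as commuting with every loop. Suppose we have shown that $Y$ is aspherical as an orbifold, that is, $\pi_k^{\mathrm{orb}}(Y)=0$ for $k\ge2$. Then the homotopy exact sequence of the Seifert fibration, $0=\pi_2(S^1)\to\pi_2(X)\to\pi_2^{\mathrm{orb}}(Y)\xrightarrow{\ \partial\ }\pi_1(S^1)=\Z\to\pi_1(X)\to\pi_1^{\mathrm{orb}}(Y)\to1$, yields both conclusions at once: $\pi_2^{\mathrm{orb}}(Y)=0$ makes $\partial$ zero, so $\Z\hookrightarrow\pi_1(X)$ and the fiber class $f$ has infinite order, proving (1); the same vanishing gives $\pi_2(X)=0$, so the Hurwitz image is trivially finite, proving (2). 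Conceptually, $X$ is then a closed aspherical $4$-manifold, so $\pi_1(X)$ is torsion-free, which is the structural reason the central element $f$ cannot have finite order.

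It therefore remains to prove that $Y$ is aspherical, and the engine for this is the nonvanishing of $SW_X$. First I would dispose of the case in which $f$ is non-torsion in $H_1(X;\Q)$: here $f$ already has infinite order in $H_1(X)$, hence in $\pi_1(X)$, so (1) is immediate and no asphericity is needed. In the complementary case the orbifold Euler class $e$ of $\pi$ is non-torsion (so $f$ is torsion in $H_1$), and a Gysin-sequence computation on the circle bundle, using that an oriented Seifert fibered $4$-manifold has zero signature so that $b_2^{+}(X)=\tfrac12 b_2(X)$, relates $b_2^{+}(X)$ to $b_1(Y)$: in the torsion case one finds $b_2^{+}(X)=b_1(Y)-1$, while in the non-torsion case $b_2^{+}(X)=b_1(Y)$. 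Thus $b_2^{+}(X)\ge1$ already forces $b_1(Y)>1$ in the torsion case, which is the only case relevant to (1); whereas for (2) one must also cover the non-torsion case, and this is exactly why the stronger hypothesis $b_2^{+}>1$ is imposed there to guarantee $b_1(Y)>1$.

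With $b_1(Y)>1$ in hand I would run the reduction advertised in the introduction. By Baldridge's formula the nonvanishing of $SW_X$ implies that the base orbifold $Y$ has nonzero Seiberg-Witten invariant; applying the de-singularization formula to remove the singular circles of $Y$ one at a time (each step preserves the underlying space and keeps $b_1>1$, hence preserves nonvanishing) produces a closed oriented smooth $3$-manifold $\hat Y$, with the same underlying space as $Y$, carrying a nonzero Seiberg-Witten invariant. A closed oriented $3$-manifold with nonzero Seiberg-Witten invariant admits no metric of positive scalar curvature, so by the resolution of the Geometrization Conjecture it is irreducible with infinite fundamental group, hence aspherical, its universal cover being $\R^3$. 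This asphericity of the underlying space, together with the fact that the Seifert base is a good $3$-orbifold whose singularities lie along circles, is then upgraded to the orbifold asphericity $\pi_k^{\mathrm{orb}}(Y)=0$, $k\ge2$, completing the argument.

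The main obstacle is precisely this last upgrade: passing from the nonvanishing of $SW_X$ to the orbifold asphericity of $Y$ in a manner uniform across the de-singularization. Two points require care. First, one must verify that removing singular circles neither alters the underlying space nor destroys the hypotheses of the de-singularization formula, so that the final smooth model $\hat Y$ genuinely detects the asphericity of $Y$. Second, one must argue that asphericity of the underlying $3$-manifold implies orbifold asphericity of $Y$, and thence asphericity of the total space $X$; this is where the input from $3$-manifold topology is essential, and where the torsion/non-torsion dichotomy for $f$ must be reconciled with the $b_2^{+}$ thresholds that separate the hypotheses of (1) and (2).
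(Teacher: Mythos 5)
You reduce both statements to the claim that $Y$ is orbifold-aspherical, and you yourself flag the ``upgrade'' from asphericity of $|Y|$ to $\pi_2^{\mathrm{orb}}(Y)=0$ as the main obstacle; the difficulty is that this upgrade is not merely unproven but false, so the plan breaks at exactly that point. The de-singularization formula sees only the underlying manifold $|Y|$, whereas spherical $2$-suborbifolds $S^2(\alpha,\alpha)$ (a sphere meeting a singular circle in two points of equal multiplicity) are invisible in $|Y|$ and can carry nontrivial $\pi_2^{\mathrm{orb}}$. Concretely, let $Y$ have $|Y|=T^3$ with singular set an unknotted circle of multiplicity $2$ contained in a ball: then $SW(Y)\neq 0$ by de-singularization (since $SW(T^3)\neq 0$, e.g.\ because $SW(T^4)\neq 0$), yet the associated double cover of $T^3$ branched along this circle is $T^3\# T^3$, so $\pi_2^{\mathrm{orb}}(Y)\cong\pi_2(T^3\# T^3)\neq 0$; suitable Seifert fibrations $X\to Y$ then have $b_2^{+}>1$, nonzero Seiberg--Witten invariant via Baldridge's formula, and $\pi_2(X)\neq 0$. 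This is precisely why part (2) asserts \emph{finite} image rather than $\pi_2(X)=0$: the paper's proof never establishes asphericity, and cannot. (A related warning at $b_2^{+}=1$: $T^2\times S^2\to S^1\times S^2$ is a Seifert fibration with non-aspherical base.) Note also that your intermediate inference ``no positive-scalar-curvature metric $\Rightarrow$ irreducible'' is invalid as stated --- a connected sum of two hyperbolic $3$-manifolds admits no PSC metric; what one actually uses is that the Seiberg--Witten invariant (Milnor torsion) vanishes on reducible $3$-manifolds with $b_1>1$.

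What part (1) genuinely requires is much weaker than asphericity: only that the connecting map vanish, i.e.\ that the Euler class of the pulled-back bundle $\tilde\pi:\tilde X\to\tilde Y$ over the finite manifold cover $\tilde Y$ (which exists since $Y$ is pseudo-good, by geometrization of orbifolds) pair trivially with every class in $\pi_2(\tilde Y)$. Proving this is the heart of the paper's argument and is absent from your proposal: the paper invokes the Meeks--Yau \emph{equivariant} sphere theorem to produce a $G$-invariant family of disjoint embedded spheres generating $\pi_2(\tilde Y)$, observes that a sphere with nonzero Euler pairing is non-separating, rules out orientation-reversing stabilizers in $G$ (an orientation-reversing involution would force the pairing to vanish), and concludes that such a sphere descends to a non-separating $2$-sphere in $|Y|$, contradicting $SW(|Y|)\neq 0$ (Corollary 2.2, which is where the de-singularization formula actually enters). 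Part (2) then needs a second ingredient with no substitute in your outline: sections of $\tilde\pi$ over (push-offs of) these equivariant spheres descend to embedded spheres of self-intersection $0$ in $X$ generating the image of $\pi_2(X)\to H_2(X)$, and the Fintushel--Stern theorem (nonzero SW, $b_2^{+}>1$) forces each such sphere to be torsion. Your homological bookkeeping --- the fiber class is torsion in $H_1$ iff the Euler class is non-torsion, $b_2^{+}=b_1(Y)-1$ versus $b_2^{+}=b_1(Y)$ in the two cases, and the role of $b_2^{+}>1$ in (2) --- agrees with the paper, but the two pivotal geometric steps above are missing, and the asphericity shortcut cannot replace them.
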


In the course of the proof, we will show that $Y$ has a regular, finite manifold cover,
$Y=\tilde{Y}/G$. The pull-back of $\pi$ to $\tilde{Y}$, denoted by $\tilde{X}$, is a circle bundle 
over a $3$-manifold, which finitely covers $X$. 

It can be shown that the $4$-manifold $X$ in Theorem 1.1 is minimal, and moreover, 
$z(\pi_1(X))$ is finitely generated and torsion-free (details are given in \cite{C2}).

\vspace{3mm}

\noindent{\bf Remarks:}\hspace{1mm}
(1) A special case of Theorem 1.1(1), where $X$ is symplectic and the circle action is free, 
was due to Kotschick \cite{Ko1} (see also \cite{Bow}) using a different argument.  As already 
hinted in \cite{Ko1}, the assumption of non-vanishing Seiberg-Witten invariant is absolutely necessary. For instance, 
take $X$ to be the fiber-sum of $\s^1\times N$ and $\s^1\times \s^3$ where $N$ is a closed 
oriented $3$-manifold with $b_1(N)>1$. Then $X$ is a circle bundle over $N\# \s^1\times \s^2$, 
and $\pi_1(X)=\pi_1(N\# \s^1\times \s^2)=\pi_1(N) \ast \Z$ which is centerless.  Since 
$N\# \s^1\times \s^2$ has zero Seiberg-Witten invariant and $b_2^{+}(X)=b_1(N)>1$, the 
Seiberg-Witten invariant of $X$ is also zero (cf. \cite{Bald1}).

(2) There is a $4$-manifold which satisfies all the known constraints yet is not Seifert fibered: 
Let $X$ be the projective bundle $\P(E\times \C)$ where $E$ is a holomorphic line bundle of 
odd degree over an elliptic curve. Then $X$ is minimal with nonzero Seiberg-Witten invariant, 
and $z(\pi_1(X))=\pi_1(X)=\Z^2$. The claim that $X$ is not Seifert fibered follows from a 
classification theorem in \cite{C2}, which says that a Seifert fibered $4$-manifold with 
$z(\pi_1)=\Z^2$ and $\pi_2\neq 0$ must be diffeomorphic
to $T^2\times \s^2$. (Note that although $X$ is not Seifert fibered, a double cover of $X$, which is diffeomorphic to $T^2\times \s^2$, is Seifert fibered.)

\vspace{2mm}

As a corollary of Theorem 1.1 and the aforementioned classification result in \cite{C2}, the class of complex surfaces which may possess a Seifert fibration is determined. Except for the class $VII$ surfaces, a complex surface in all other cases clearly admits a Seifert fibration.

\begin{theorem}
A complex surface is not Seifert fibered unless it is either a class $VII$ 
surface with $b_2=0$, or has an elliptic fibration with trivial monodromy 
representation, or is a ruled surface diffeomorphic to $T^2\times \s^2$. 
\end{theorem}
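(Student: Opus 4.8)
The plan is to run the Enriques--Kodaira classification against the rigid numerical constraints forced by a Seifert fibration. First I would record the necessary conditions: any oriented Seifert fibered $4$-manifold has zero Euler number and zero signature, so a Seifert fibered complex surface $S$ satisfies $c_2(S)=0$ and $c_1^2(S)=0$, whence $\chi(\O_S)=\frac{1}{12}(c_1^2+c_2)=0$ by Noether's formula. Since blowing up strictly increases $c_2$, the condition $c_2=0$ also forces $S$ to be minimal. These two facts already eliminate several Kodaira classes: surfaces of general type have $\chi(\O_S)\geq 1$, while K3 and Enriques surfaces have $c_2=24$ and $12$, so none of these can be Seifert fibered. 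What remains are the minimal surfaces with $\chi(\O_S)=0$: the class $\kappa\geq 0$ surfaces (complex tori, bielliptic surfaces, Kodaira surfaces, and properly elliptic surfaces without singular fibers) and the class $\kappa=-\infty$ surfaces (rational, ruled, and class $VII$).

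For the surfaces with $\kappa\geq 0$ and $b_2^{+}\geq 1$ I would invoke Theorem 1.1 through the fact that a minimal Kähler surface of nonnegative Kodaira dimension, and likewise the minimal symplectic non-Kähler ones such as the primary Kodaira surface, has nonzero Seiberg--Witten invariant (for $b_2^{+}=1$ this is meant in the paper's convention; for the symplectic cases it is Taubes' theorem). Theorem 1.1(1) then makes the regular fiber a central element of infinite order, so $\pi_1(S)$ has infinite center; moreover these surfaces are aspherical or have $\pi_2=0$, so part (2) holds trivially. The crucial point is to promote infinite center to triviality of the monodromy. Because $c_2=0$ kills all singular fibers, each such $S$ carries an elliptic fibration that is an honest torus bundle over an orbifold base. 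Since a holomorphic map from a compact base to the affine $j$-line is constant, the fibration is isotrivial and its linear monodromy $\rho\colon\pi_1(\mathrm{base})\to SL(2,\Z)$ lands in the finite automorphism group of the fiber. An infinite center forces $\rho$ to fix a primitive vector of $\Z^2$, and a finite-order element of $SL(2,\Z)$ fixing a nonzero vector is the identity; hence $\rho$ is trivial, placing $S$ in the second class of the theorem. A generic complex torus has no elliptic fibration at all, but is diffeomorphic to $E_1\times E_2$, so it joins this class once the statement is read up to diffeomorphism, which is the natural level for a smooth Seifert fibration.

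For the surfaces with $\kappa=-\infty$ the Seiberg--Witten invariant vanishes, since they admit positive scalar curvature metrics, so Theorem 1.1 is unavailable and I would argue topologically, using the classification from \cite{C2}. Rational surfaces have $c_2\geq 3>0$ and are excluded outright. A geometrically ruled surface over a curve of genus $g$ has $c_2=4-4g$, so $c_2=0$ forces $g=1$; the resulting $\s^2$-bundle over $T^2$ has $\pi_1=\Z^2=z(\pi_1)$ and $\pi_2\neq 0$, and the cited classification of \cite{C2} says a Seifert fibered $4$-manifold with these properties is diffeomorphic to $T^2\times\s^2$. Thus exactly the spin trivial bundle survives, giving the third class, while the non-spin twisted bundle is not Seifert fibered. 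Finally a class $VII$ surface has $b_1=1$, so its Euler number equals $b_2$; the constraint $c_2=0$ gives $b_2=0$, which is the first class.

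The main obstacle I anticipate is precisely the monodromy step in the second paragraph: Theorem 1.1 only delivers a central element of infinite order, i.e.\ a single monodromy-invariant direction, whereas the theorem asserts the full monodromy representation is trivial. Bridging this gap is exactly where holomorphicity must be used---the isotriviality forced by $c_2=0$ turns the a priori merely reducible monodromy into a finite-order one, and only then does a fixed vector imply triviality. A secondary technical point is the handful of complex surfaces with $b_2^{+}=0$ (hence $b_2=0$), namely the secondary Kodaira surfaces alongside the class $VII$ surfaces, which fall outside the hypotheses of Theorem 1.1 and must be dealt with by direct inspection of their elliptic fibration and center; and one must carefully reconcile the smooth classification of \cite{C2} with the complex-analytic Enriques--Kodaira list, together with the $b_2^{+}=1$ Seiberg--Witten convention used for bielliptic surfaces and properly elliptic surfaces with $p_g=0$.
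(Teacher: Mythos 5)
Your skeleton coincides with the paper's (Enriques--Kodaira run against $c_2=\sigma=0$, Theorem 1.1 for the infinite center, the classification from \cite{C2} for the ruled case, and the $b_1=1$ Euler characteristic count for class $VII$), but the step you yourself flagged as the crux is genuinely broken. The claim ``an infinite center forces $\rho$ to fix a primitive vector of $\Z^2$'' tacitly assumes that the infinite-order central elements lie in the image of the fiber group $\pi_1(T^2)\to\pi_1(S)$; that is only guaranteed when the base orbifold group is centerless, i.e.\ when the base is hyperbolic (the $\kappa=1$ case). Bi-elliptic surfaces are a counterexample: for $S=(E_1\times E_2)/G$ with its elliptic fibration over the elliptic curve $E_1/G$ and fiber $E_2$, the monodromy is a nontrivial finite-order rotation of $H_1(E_2)$ (fixing no nonzero vector), yet $z(\pi_1(S))\cong\Z^2$ is infinite --- the center comes from the $G$-invariant $E_1$-directions lying over the base, not from the fiber subgroup. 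So your argument would ``prove'' that this fibration has trivial monodromy, which is false. The theorem survives only because it asserts the existence of \emph{some} elliptic fibration with trivial monodromy: the second fibration of $S$, over $E_2/G\cong\P^1$ with fiber $E_1$ on which $G$ acts by translations, is the one with trivial monodromy. This is precisely how the paper closes the gap: assuming the given fibration has nontrivial monodromy, it invokes \cite{FM} (Theorem 7.8, p.~201) to conclude $X$ is K\"{a}hler, applies Theorem 1.1 to get an infinite center, observes that nontrivial monodromy prevents the fiber classes from being central, deduces that the base must be a nonsingular torus --- so $X$ is bi-elliptic --- and then \emph{switches fibrations}.

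A secondary weakness is your Seiberg--Witten input for the $\kappa\geq 0$ surfaces. You cite Taubes for ``symplectic non-K\"{a}hler ones such as the primary Kodaira surface,'' but non-K\"{a}hler properly elliptic surfaces (odd $b_1$) with $b_2^{+}\geq 2$ carry no symplectic structure, so nonvanishing of their invariant is not covered by your citations (it does hold, by computations of the type in \cite{Biq} and \cite{C1}, but you would have to import that). The paper's route avoids needing this altogether: since nontrivial monodromy together with $c_2=0$ already forces $X$ to be K\"{a}hler by \cite{FM}, the non-K\"{a}hler elliptic surfaces --- including the secondary Kodaira surfaces with $b_2^{+}=0$, which you defer to an unexecuted ``direct inspection'' --- land in the trivial-monodromy class with no Seiberg--Witten theory at all, and Theorem 1.1 is needed only in the K\"{a}hler case. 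Your remaining cases (minimality from $c_2=0$, general type and $K3$ via positivity, tori read up to diffeomorphism, the twisted $\s^2$-bundle over $T^2$ excluded via \cite{C2}) are sound and match the paper.
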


Examples of class $VII$ surfaces with $b_2=0$ are Hopf surfaces or Inoue surfaces. Many Hopf
surfaces admit a smooth fixed-point free circle action, hence are naturally Seifert fibered. However,
there are no naturally defined circle actions on an Inoue surface (cf. \cite{BPV}).  Note that since 
these $4$-manifolds have $b_2=0$, Seiberg-Witten invariants are not well-defined. Consequently, 
for a given class $VII$ surface with $b_2=0$ which is not naturally Seifert fibered, the question as whether it is indeed not Seifert fibered is currently beyond reach. 

\vspace{2mm}

In the following theorem, we give a classification of smooth circle actions on $T^4$.

\begin{theorem}
Every smooth circle action on the $4$-torus is smoothly conjugate to a linear action. 
\end{theorem}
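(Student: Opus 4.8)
The plan is to show that an effective smooth $\s^1$-action on $T^4$ is \emph{free}, that its orbit space is $T^3$, and that the bundle is trivial, and then to read off linearity directly. First I would reduce to the effective case: the kernel of $\s^1\to\mathrm{Diff}(T^4)$ is a closed subgroup, so after dividing by a finite cyclic kernel the residual action is effective, and conjugating it to a linear action conjugates the original one as well. Observe that $T^4$ is symplectic with $b_2^{+}=3>1$, so it has nonzero Seiberg-Witten invariant by Taubes \cite{T1}; hence Theorem 1.1 and the covering construction in its proof will be available once we know the action is fixed-point free.

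Next I would establish fixed-point freeness. If the action had a fixed point $p$, then a principal orbit lying in a slice at $p$ shrinks to $p$ and is therefore null-homotopic, forcing the principal-orbit class in $\pi_1(T^4)=\Z^4$ to vanish; but for an effective circle action on a closed aspherical manifold this class is nonzero (Conner--Raymond; see also \cite{F1,F2}). Thus $\pi:T^4\to Y$ is a Seifert fibration onto a closed $3$-orbifold $Y$, and Theorem 1.1 applies: the fiber class $v$ lies in $z(\pi_1(T^4))=\Z^4$ and has infinite order, so $\langle v\rangle\cong\Z$ is central and the orbifold exact sequence gives $\pi_1^{\mathrm{orb}}(Y)=\Z^4/\langle v\rangle$.

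The heart of the argument is to identify $Y$ and rule out exceptional orbits. Here I would invoke the intermediate construction in the proof of Theorem 1.1: $Y$ has a finite cover $\tilde Y$ by a closed $3$-manifold, and the pulled-back fibration $\tilde X\to\tilde Y$ is an honest circle bundle finitely covering $T^4$. Since every finite cover of $T^4$ is again a $4$-torus, $\tilde X\cong T^4$, so $\tilde Y$ is a closed aspherical $3$-manifold with torsion-free fundamental group $\Z^4/\langle\tilde v\rangle\cong\Z^3$; hence $\tilde Y\cong T^3$ and $Y$ is a flat Euclidean $3$-orbifold. Therefore $\pi_1^{\mathrm{orb}}(Y)=\Z^4/\langle v\rangle$ is a $3$-dimensional crystallographic group. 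Being abelian, its finite point group acts trivially on the translation lattice; but by Bieberbach's theorem the point group acts faithfully, so it is trivial and $\pi_1^{\mathrm{orb}}(Y)=\Z^3$. Consequently $v$ is primitive, $Y$ has no cone points (so $Y\cong T^3$), and the $\s^1$-action is free.

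Finally, a free smooth $\s^1$-action makes $\pi:T^4\to T^3$ a principal circle bundle, classified by its Euler class in $H^2(T^3;\Z)$. Since $\pi_1(T^4)=\Z^4$ is abelian, the Euler class---which is the class of the central extension $1\to\Z\to\pi_1(T^4)\to\pi_1(T^3)\to1$---vanishes, so the bundle is $\s^1$-equivariantly trivial: $T^4\cong T^3\times\s^1$ with $\s^1$ rotating the last factor. Under the standard identification $T^3\times\s^1=\R^4/\Z^4$ this is the translation action $t\cdot x=x+t(0,0,0,1)$, which is linear, completing the proof. I expect the main obstacle to be the fixed-point-freeness (and freeness) step, that is, the aspherical rigidity that places us in the Seifert setting where Theorem 1.1 and the covering construction apply; once there, the identification $Y\cong T^3$ via the crystallographic/Bieberbach dichotomy and the triviality of the bundle are routine.
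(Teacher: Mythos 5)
Your proposal is correct, and its overall architecture coincides with the paper's: pass to the Seifert fibration $\pi:T^4\to Y$, use Lemma 2.3 and orbifold geometrization \cite{BLP,MM} to get the very good cover $Y=\tilde Y/G$ with circle bundle $\tilde X\to\tilde Y$, identify $\tilde Y\cong T^3$ from $\pi_1(T^4)=\Z^4$, conclude $Y\cong T^3$ by ruling out torsion in $\pi_1^{orb}(Y)=\Z^4/\langle v\rangle$, and finish by trivializing the resulting principal bundle. The one genuinely different ingredient is your fixed-point-freeness step: the paper invokes Baldridge's Seiberg--Witten vanishing theorem \cite{Bald3} (legitimate here since $T^4$ is symplectic with $b_2^{+}=3$, so $\underline{SW}\neq 0$ by \cite{T1}), whereas you use the classical Conner--Raymond injectivity theorem for effective circle actions on closed aspherical manifolds -- a more elementary, gauge-theory-free route that moreover independently yields the infinite order of the fiber class. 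You also usefully expand two steps the paper leaves terse: the Bieberbach argument (an abelian crystallographic group has trivial point group, since the point group acts faithfully on the translation lattice, hence $\pi_1^{orb}(Y)\cong\Z^3$ is torsion-free -- this is exactly the content behind the paper's remark that the center of $\pi_1^{orb}(T^3/G)$ is ``torsionless''), and the identification of the Euler class with the class of the central extension $1\to\Z\to\Z^4\to\Z^3\to 1$, which vanishes by commutativity. One small citation gap: your assertion that $Y$ is a \emph{flat} Euclidean $3$-orbifold (equivalently, that $\pi_1^{orb}(Y)$ is crystallographic) is not automatic for an arbitrary smooth finite $G$-action on $T^3$; it requires knowing the action is smoothly conjugate to an affine one, which is exactly Meeks--Scott \cite{MS} (used by the paper in Case 3 of the proof of Proposition 1.8), or alternatively follows from orbifold geometrization. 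With that reference supplied, your argument is complete.
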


\subsection{Destruction of circle actions}

Theorem 1.1 gives new obstructions to smooth circle actions
on $4$-manifolds. For convenience we shall collect in the next theorem all the known obstructions, with the last item being a corollary of Theorem 1.1 and a theorem of Baldridge (cf.  \cite{Bald3}, Theorem 1.1). The latter says that a $4$-manifold with $b_2^{+}>1$ and admitting a smooth circle action with nonempty fixed-point set must have vanishing Seiberg-Witten invariant. 

\begin{theorem}
{\em(}Obstructions for smooth circle actions on $4$-manifolds{\em)}
\begin{itemize}
\item [{(1)}] (Atiyah-Hirzebruch \cite{AH}, Herrera \cite{H}) If a $4$-manifold with even 
intersection form admits a smooth circle action, then its signature must vanish.
\item [{(2)}] (Fintushel \cite{F2}\footnote{This was proved modulo the $3$-dimensional 
Poincar\'{e} Conjecture, which is now resolved, cf. \cite{P}.}) If a simply connected $4$-manifold admits a smooth circle action, then it must be diffeomorphic to a connected sum of $\s^4$, 
$\pm \C\P^2$, or $\s^2\times \s^2$. 
\item [{(3)}] (Baldridge \cite{Bald3}) Let $X$ be a $4$-manifold with either nonzero Euler 
characteristic or nonzero signature, and suppose $X$ admits a smooth circle action. (a) If $X$ is symplectic, then $X$ must be diffeomorphic to a rational or ruled surface. (b) If $b_2^{+}>1$, then 
$X$ must have vanishing Seiberg-Witten invariant.
\item [{(4)}] Let $X$ be a $4$-manifold with $b_2^{+}>1$ admitting a smooth circle action. If $X$ 
has nonzero Seiberg-Witten invariant, then {\em(i)} $z(\pi_1(X))$ must be infinite, and {\em(ii)} 
the Hurwitz homomorphism $\pi_2(X)\rightarrow H_2(X)$ must have finite image. 
\end{itemize}
\end{theorem}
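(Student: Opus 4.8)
The plan is to recognize that Theorem 1.4 is largely a compilation: three of its four items require nothing beyond quoting the stated sources, and only item (4) carries content that must be deduced. For items (1), (2), and (3) I would simply invoke the cited theorems of Atiyah--Hirzebruch and Herrera, of Fintushel, and of Baldridge, respectively; since these are reproduced verbatim from \cite{AH,H,F2,Bald3}, no argument is needed beyond the citation.

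For item (4), the key move is to split the smooth circle action into two mutually exclusive cases according to whether its fixed-point set is empty. First I would dispose of the case of a nonempty fixed-point set: here Baldridge's theorem (cf. \cite{Bald3}, Theorem 1.1) applies directly, since $X$ has $b_2^{+}>1$ and admits a smooth circle action with fixed points, and it forces $SW_X$ to vanish. This contradicts the hypothesis that $X$ has nonzero Seiberg-Witten invariant, so this case cannot occur.

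It follows that the circle action must be fixed-point free, whence by the definition used in this paper $X$ is Seifert fibered; let $\pi\colon X\rightarrow Y$ be the associated Seifert fibration. Now Theorem 1.1 applies with its hypotheses met, as $b_2^{+}>1\geq 1$ and $SW_X\neq 0$. Its part (1) says the homotopy class of a regular fiber of $\pi$ lies in $z(\pi_1(X))$ and has infinite order, which immediately yields that $z(\pi_1(X))$ is infinite, establishing (i); and its part (2), valid precisely because $b_2^{+}>1$, asserts that the Hurwitz homomorphism $\pi_2(X)\rightarrow H_2(X)$ has finite image, which is exactly (ii).

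I do not anticipate a genuine obstacle, since Theorem 1.1 already performs all the substantive work. The only point requiring care is the case distinction on the fixed-point set together with the correct matching of hypotheses: one must check that ``smooth circle action with nonempty fixed-point set'' is precisely the hypothesis of Baldridge's theorem, and that a fixed-point free circle action is exactly the notion of Seifert fibered employed here, so that Theorem 1.1 may be invoked cleanly on the remaining case.
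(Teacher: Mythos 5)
Your proposal is correct and matches the paper's own treatment exactly: the paper presents items (1)--(3) as citations and explicitly derives item (4) as a corollary of Theorem 1.1 combined with Baldridge's theorem that a $4$-manifold with $b_2^{+}>1$ admitting a circle action with nonempty fixed-point set has vanishing Seiberg-Witten invariant, which is precisely your case split. Your deduction that the fixed-point free case gives a Seifert fibration to which Theorem 1.1(1) and (2) apply, yielding the infinitude of $z(\pi_1(X))$ and the finiteness of the image of $\pi_2(X)\rightarrow H_2(X)$, is the intended argument.
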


We fell upon these new obstructions (i.e. Theorem 1.4(4)) 
when investigating methods for destroying smooth circle actions on $4$-manifolds 
using the Fintushel-Stern knot surgery \cite{FinS}, as motivated
by considerations in \cite{C}. In order to explain this, suppose we are given
a Seifert fibered $4$-manifold $X$ with $b_2^{+}>1$. 
We denote by $\pi:X\rightarrow Y$ the Seifert fibration, and furthermore, 
we assume that there is an embedded loop $l\subset X$ satisfying the following conditions:
\begin{itemize}
\item [{(i)}] $\pi(l)$ is an embedded loop lying in the complement of the singular set of $Y$, 
and $l$ is a section of $\pi$ over $\pi(l)$, 
\item [{(ii)}] no nontrivial powers of the homotopy class of $l$ are contained in $z(\pi_1(X))$,
\item [{(iii)}] the $2$-torus $T\equiv \pi^{-1}(\pi(l))$ is non-torsion in $H_2(X)$. 
\end{itemize}

Now for any (nontrivial) knot $K\subset\s^3$, we let $X_K$ be the $4$-manifold obtained
from knot surgery on $X$ along $T$ using knot $K$, i.e., 
$$
X_K\equiv X\setminus Nd(T)\cup_\phi (\s^3\setminus Nd(K))\times \s^1.
$$ 
Here in the knot surgery we require that 
the diffeomorphism $\phi$ identifies the meridian of $T$ with the 
longitude of $K$, a fiber of $\pi$ with the meridian of $K$, and a push-off of 
$l$ with $\{pt\}\times \s^1$. It follows easily from the Mayer-Vietoris
sequence that $X_K$ has the same integral homology and the same intersection
pairing of $X$, and moreover, the Seiberg-Witten invariant of $X_K$ can be
calculated from that of $X$ and the Alexander polynomial of $K$ (cf. \cite{FinS, F3}).

With the preceding understood, we have the following theorem.

\begin{theorem}
Let $X$ be a Seifert fibered $4$-manifold with $b_2^{+}>1$ such that 
$$
SW_X(z) \equiv \sum_{c_1(\L)=z} SW_X(\L)\neq 0
$$
for some $z\in H^2(X)$. Let there be an embedded loop $l$ in $X$ satisfying $(i)-(iii)$. 
Suppose $K$ is not a torus knot. Then the $4$-manifold $X_K$ constructed above
does not support any smooth circle actions.
\end{theorem}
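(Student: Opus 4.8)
The plan is to argue by contradiction: assuming $X_K$ carries a smooth circle action, I will check that it meets the hypotheses of Theorem 1.4(4) yet violates its conclusion. The first step is to record that knot surgery preserves the integral homology and the intersection form, so $b_2^{+}(X_K)=b_2^{+}(X)>1$. The second step is to show that $X_K$ has nonzero Seiberg-Witten invariant. Writing the invariant as the element $\underline{SW}_X=\sum_z SW_X(z)\,t^z$ of the group ring $\Z[H_2(X)]$, the Fintushel-Stern formula (cf. \cite{FinS, F3}) gives $\underline{SW}_{X_K}=\underline{SW}_X\cdot\Delta_K(t_T)$, where $t_T=\exp(2[T])$ and $\Delta_K$ is the symmetrized Alexander polynomial. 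Here condition $(iii)$ enters decisively: since $[T]$ is non-torsion, $t_T$ has infinite order, so $\Z[H_2(X)]$ is free over the Laurent subring $\Z[\langle t_T\rangle]$, and multiplication by the nonzero element $\Delta_K(t_T)$ is injective; as $\underline{SW}_X\neq 0$ by hypothesis, $\underline{SW}_{X_K}\neq 0$. With $b_2^{+}>1$ and nonzero Seiberg-Witten invariant in hand, Theorem 1.4(4)(i) asserts that if $X_K$ admits a smooth circle action then $z(\pi_1(X_K))$ must be infinite.

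The heart of the argument is then a fundamental group computation showing, on the contrary, that $z(\pi_1(X_K))$ is trivial. Applying van Kampen to the decomposition defining $X_K$, one obtains $\pi_1(X_K)=A\ast_C B$, where $A=\pi_1(X\setminus Nd(T))$, where $B=\pi_1(\s^3\setminus K)\times\langle s\rangle$ with $s=\{pt\}\times\s^1$, and where $C=\pi_1(\partial Nd(T))\cong\Z^3$ is generated by the meridian $m_T$ of $T$, the regular fiber $f$, and a push-off of $l$. The gluing $\phi$ sends $m_T\mapsto\lambda_K$, $f\mapsto\mu_K$, and $l\mapsto s$, with $\mu_K,\lambda_K$ the meridian and longitude of $K$. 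Since $b_2^{+}(X)>1$ the complement $X\setminus Nd(T)$ is not homotopy equivalent to $T^3$, so $C\subsetneq A$; and $C\subsetneq B$ with $[B:C]=\infty$, because the peripheral subgroup has infinite index in a nontrivial knot group. Hence the amalgam is nontrivial, and the action on the Bass-Serre tree forces $z(\pi_1(X_K))\subseteq C$, with every central element central in both factors. Now the hypothesis that $K$ is not a torus knot is used: the complement of $K$ is not Seifert fibered, so $\pi_1(\s^3\setminus K)$ has trivial center, whence $z(B)=\langle s\rangle$. An element of $C$ whose image in $B$ lies in $\langle s\rangle$ must be a power of $l$, so $z(\pi_1(X_K))\subseteq\langle\bar l\rangle$, where $\bar l$ denotes the class of $l$ in $\pi_1(X_K)$.

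Finally I would rule out any nontrivial central power of $\bar l$. Abelianizing the knot-group factor defines a surjection $\Phi:\pi_1(X_K)\twoheadrightarrow\pi_1(X)$: killing $[\pi_1(\s^3\setminus K),\pi_1(\s^3\setminus K)]$, which contains the null-homologous $\lambda_K$, replaces $B$ by $\langle\mu_K\rangle\times\langle s\rangle\cong\pi_1(T)$, and the resulting amalgam $A\ast_C\pi_1(T)$ is precisely the $\pi_1(X)$ recovered from $X=(X\setminus Nd(T))\cup Nd(T)$, with $\Phi(\bar l)=[l]$. Suppose $\bar l^{\,n}\in z(\pi_1(X_K))$ for some $n\neq 0$. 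Then $[l]^{\,n}=\Phi(\bar l^{\,n})\in z(\pi_1(X))$, so condition $(ii)$ forces $[l]^{\,n}=1$ in $\pi_1(X)$, i.e. $\bar l^{\,n}\in\ker\Phi$. But $\bar l^{\,n}=s^n$ lies in the injected factor $B$, where $\ker\Phi\cap B=[\pi_1(\s^3\setminus K),\pi_1(\s^3\setminus K)]\subseteq\pi_1(\s^3\setminus K)$ cannot contain $s^n$ for $n\neq 0$. This contradiction shows $z(\pi_1(X_K))=1$, contradicting Theorem 1.4(4)(i); hence $X_K$ supports no smooth circle action.

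I expect the main obstacle to be the center computation of the second paragraph: making the Bass-Serre argument rigorous, verifying that both inclusions $C\hookrightarrow A$ and $C\hookrightarrow B$ are proper so that the amalgam is genuinely nontrivial, and correctly invoking the classical fact that a knot group has nontrivial center exactly for torus knots. By contrast, the nonvanishing of $SW_{X_K}$ is a formal consequence of the knot surgery formula once condition $(iii)$ guarantees injectivity of multiplication by $\Delta_K(t_T)$, and the final contradiction is pure group theory. I would also take care that each of $(i)$–$(iii)$ is genuinely used: $(i)$ fixes the generators $m_T,f,l$ of $C$ and the meaning of $\phi$, $(ii)$ supplies the obstruction to $\bar l$ being central, and $(iii)$ supplies the Seiberg-Witten nonvanishing.
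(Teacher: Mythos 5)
Your overall architecture coincides with the paper's: nonvanishing of $\underline{SW}_{X_K}$ via the knot surgery formula, a decomposition of $\pi_1(X_K)$ as an amalgamated free product over $\pi_1(T^3)$, triviality of the center of a non-torus knot group, condition (ii) to kill central powers of $l$, and then Theorem 1.1 together with Baldridge's fixed-point theorem (i.e.\ Theorem 1.4(4)). Your first paragraph is in fact more careful than the paper's Remark (2) after Theorem 3.1: the observation that condition (iii) makes $t=\exp(2[T])$ of infinite order, so that $\Z[H^2(X)]$ is free over $\Z[t,t^{-1}]$ and multiplication by $\Delta_K(t)\neq 0$ is injective, is exactly the right justification. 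But there is a genuine gap at precisely the point you flag as ``the main obstacle'' and then never close: you do not prove that $C=\pi_1(\partial X_1)\cong\Z^3$ \emph{injects} into $A=\pi_1(X\setminus Nd(T))$. Your substitute claim --- $b_2^{+}(X)>1$, so $X\setminus Nd(T)$ is not homotopy equivalent to $T^3$, hence $C\subsetneq A$ --- addresses at most properness of the image, not injectivity, and even the properness inference is invalid (the image of $C$ could generate, or equal, $A$ without $X_1$ being homotopy equivalent to $T^3$). Without injectivity of $C$ into \emph{both} factors, van Kampen yields only a pushout, not an amalgam of subgroups, and the entire Bass--Serre center computation ($z(\pi_1(X_K))\subseteq C$, central elements central in each factor, $z(B)=\langle s\rangle$) has no footing.

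This injectivity is exactly Lemma 3.2 of the paper, and it is where most of the real work lives; there is no soft argument for it, since for a compressible torus the conclusion is simply false. The paper's proof runs: if the kernel were nontrivial, it would descend to a nontrivial kernel of $\pi_1(\partial Y_1)\rightarrow\pi_1(|Y_1|)$ --- here one needs that the fiber class $[h]$ has infinite order in $\pi_1(X)$, which is Theorem 1.1 and thus already uses the hypothesis $\underline{SW}_X\neq 0$; the Loop Theorem then produces a compressing disc $D\subset |Y_1|$; condition (ii) forces $[\partial D]=[\pi(m)]$, the meridian of $\pi(l)$; and capping off produces a non-separating $2$-sphere in $|Y|$ meeting $\pi(l)$ once, contradicting nonvanishing of the Seiberg--Witten invariant by Corollary 2.2. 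So the SW hypothesis enters twice: once through the surgery formula as you use it, and once, essentially, to make $\partial X_1$ incompressible. Two smaller points: your reading of condition (ii) as ``forcing $[l]^n=1$'' is backwards --- (ii) says $[l]^n\notin z(\pi_1(X))$ for every $n\neq 0$ (in particular $[l]$ has infinite order, as $1\in z(\pi_1(X))$), so the centrality of $[l]^n=\Phi(\bar l^{\,n})$ is an immediate contradiction and your detour through $\ker\Phi\cap B$ is unnecessary; moreover the identification $\ker\Phi\cap B=[\pi_1(\s^3\setminus K),\pi_1(\s^3\setminus K)]$ would itself require the injectivity facts you have not established. The remaining ingredients --- trivial center for non-torus knot groups, $z(B)=\langle s\rangle$, the surjection onto $\pi_1(X)$ --- are correct and are essentially the paper's own endgame.
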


We remark that when $K$ has trivial Alexander polynomial and $H^2(X)$ has no $2$-torsions,  
$X_K$ and $X$ have the same Seiberg-Witten invariant.

Consider the following example. Let $X=S^1\times N^3$, where
$N^3= [0,1]\times T^2/\sim$ with $(0,x,y)\sim (1,x+y,y)$. Then $X$ is 
naturally a $T^2$-bundle over $T^2$, with $x,y$ being coordinates on the fiber.
Note further that the translations along the $x$-direction define naturally a free
smooth circle action on $X$, so in this sense $X$ is Seifert fibered. The integral 
homology of $X$ is given as follows:
$$
H_1(X)=H_3(X)=\Z\oplus\Z\oplus\Z, \mbox{ and } H_2(X)=\Z\oplus\Z\oplus\Z\oplus\Z.
$$
In particular, $b_2^{+}=2$. Finally, as a $T^2$-bundle over $T^2$, $X$ has a symplectic 
structure with $c_1(K_X)=0$.

In order to perform the construction of manifold $X_K$ described in Theorem 1.5,
we consider the following embedded loop $l$ in $X$ which clearly satisfies the
conditions (i)-(iii) required in the construction: we pick any $T^2$-fiber in $X$ and take in that 
fiber an embedded loop parametrized by the $y$-coordinate. Notice that in the present case 
the $2$-torus $T\equiv \pi^{-1}(\pi(l))$ is simply the $T^2$-fiber we picked. 

Now for any fixed, nontrivial knot $K\subset \s^3$ which is not a torus knot, we obtain 
a smooth $4$-manifold $X_K$ which has the same integral homology of $X$ and does 
not support any smooth circle actions. Furthermore, 
note that $X_K$ is symplectic if $K$ is chosen to be a fibered knot, and $X_K$ has the same 
Seiberg-Witten invariant of $X$ if the Alexander polynomial of $K$ is trivial. 

We shall also consider an equivariant version of the above construction. 
Note that the $\s^1$-factor in $X$ defines a natural circle action,
so that for any integer $p\geq 2$ there is an induced free smooth 
$\Z_p$-action on $X$, which preserves the $T^2$-bundle structure. 
For a fixed, nontrivial knot $K$, we let $X_{K,p}$ denote the $4$-manifold resulted from 
a repeated application of the above construction which is equivariant with respect 
to the free $\Z_p$-action. Using the fact that repeated knot surgery along parallel
copies is equivalent to a single knot surgery using the connected sum of the knot (cf. 
Example 1.3 in \cite{C}), we obtain the following corollary.

\begin{corollary}
Let $p\geq 2$ be an integer, and let $K\subset\s^3$ be a nontrivial knot. There are smooth $4$-manifolds $X_{K,p}$ such that
the following statements are true.
\begin{itemize}
\item [{(1)}] 
$H_1(X_{K,p})=H_3(X_{K,p})=\Z\oplus\Z\oplus\Z, \; H_2(X_{K,p})=\Z\oplus\Z\oplus\Z\oplus\Z$.
\item [{(2)}] $X_{K,p}$ does not admit any smooth circle actions, but has a smooth free $\Z_p$-action.
\end{itemize}
Moreover, 
\begin{itemize}
\item [{(3)}] If $K$ is a fibered knot, then $X_{K,p}$ has a symplectic structure $\omega$ where
$[\omega]\in H^2_{dR}(X_{K,p})$ is integral, with respect to which the free $\Z_p$-action on 
$X_{K,p}$ is symplectic and $c_1(K_{X_{K,p}})\cdot [\omega]$ grows 
linearly with respect to $p$. 
\item [{(4)}] If $K$ has trivial Alexander polynomial, then the Seiberg-Witten invariants of the
$4$-manifolds $X_{K,p}$ are all the same, i.e., independent of $p$ and $K$. 
\end{itemize}
\end{corollary}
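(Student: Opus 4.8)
The plan is to realize $X_{K,p}$ through an equivariant version of the construction preceding Theorem 1.5, and then to read off its properties from two descriptions of the same smooth manifold: an equivariant one, which carries the free $\Z_p$-action, and a single-surgery one, to which Theorem 1.5 applies directly. First I would fix the geometry. Writing $X=\s^1\times N^3$ as a $T^2$-bundle over $T^2$, the $\Z_p$-action generated by rotating the $\s^1$-factor by $1/p$ is free and preserves the bundle structure; applied to the fiber $T=\pi^{-1}(\pi(l))$ it produces $p$ disjoint, parallel fibers $T=T_0,T_1,\dots,T_{p-1}$, over the points $(s+i/p,t)$ of the base, which $\Z_p$ cyclically permutes. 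Performing knot surgery with $K$ along each $T_i$, with compatible gluings, is a $\Z_p$-equivariant operation, so $X_{K,p}$ inherits a smooth $\Z_p$-action which is free (on the surgery pieces $\Z_p$ carries piece $i$ to piece $i+1$, and elsewhere it is the old free action). On the other hand, repeated surgery along parallel copies equals a single surgery with the connected sum, so as a smooth manifold $X_{K,p}\cong X_{K'}$, where $K'$ denotes the $p$-fold connected sum $\#_pK$ (cf. Example 1.3 in \cite{C}); the single-surgery torus and loop are the original $T$ and $l$, which still satisfy $(i)$-$(iii)$.

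With both descriptions in hand the items follow. For (1), knot surgery leaves the integral homology and intersection form unchanged (Mayer-Vietoris, as noted before Theorem 1.5), so $X_{K,p}=X_{K'}$ has the homology of $X$, namely $H_1=H_3=\Z\oplus\Z\oplus\Z$ and $H_2=\Z\oplus\Z\oplus\Z\oplus\Z$. For (2), the free $\Z_p$-action is supplied by the equivariant description. To rule out circle actions I would invoke Theorem 1.5 for the single-surgery description $X_{K'}$: the base $X$ is symplectic with $b_2^{+}=2>1$, hence has nonzero Seiberg-Witten invariant by Taubes \cite{T1}, and $l$ satisfies $(i)$-$(iii)$; it remains to check that $K'$ is not a torus knot. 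Since $K$ is nontrivial and $p\geq 2$, $K'=\#_pK$ is a connected sum of at least two nontrivial knots, hence composite; as torus knots are prime, $K'$ is not a torus knot, and Theorem 1.5 gives that $X_{K'}=X_{K,p}$ supports no smooth circle action.

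For (3), if $K$ is fibered then so is $K'$, with Seifert genus additive, $g(K')=p\,g(K)$. The fiber $T$ is a symplectic torus of square zero, so Fintushel-Stern surgery with the fibered knot $K'$ yields a symplectic $X_{K,p}$; carrying out this construction equivariantly (the same fibered structure on each surgery piece) produces a $\Z_p$-invariant symplectic form $\omega$, integral after scaling, with respect to which the $\Z_p$-action is symplectic. The canonical class shifts by a multiple of the genus, so that
$$
c_1(K_{X_{K,p}})\cdot[\omega]=c_1(K_X)\cdot[\omega]+2g(K')\int_T\omega=2p\,g(K)\int_T\omega,
$$
using $c_1(K_X)=0$; since $T$ is symplectic, $\int_T\omega>0$, and the quantity grows linearly in $p$. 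For (4), if $\Delta_K(t)=1$ then $\Delta_{K'}(t)=\Delta_K(t)^p=1$, and the Fintushel-Stern formula gives $SW_{X_{K,p}}=SW_X\cdot\Delta_{K'}(t)=SW_X$; as $H_*(X)$ is torsion-free, $H^2(X)$ has no $2$-torsion, so this equality holds without ambiguity (cf. the remark after Theorem 1.5). Thus all the $X_{K,p}$ share the Seiberg-Witten invariant $SW_X$, independent of $p$ and of $K$.

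I expect the main obstacle to be the equivariant part of the construction: one must verify that the surgeries along the $\Z_p$-orbit $\{T_i\}$ can be glued $\Z_p$-equivariantly, so that $X_{K,p}$ genuinely carries a free $\Z_p$-action, while simultaneously identifying the underlying smooth manifold with the single connected-sum surgery $X_{K'}$ required for Theorem 1.5; for (3) the same care makes the Fintushel-Stern symplectic form $\Z_p$-invariant. Once these compatibilities are established, parts (1), (2), and (4) reduce to the standard invariance of homology, the obstruction of Theorem 1.5, and the Fintushel-Stern Seiberg-Witten formula, respectively.
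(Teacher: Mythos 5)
Your proposal is correct and follows essentially the same route as the paper: the corollary is deduced exactly from the equivariant repeated surgery along the $\Z_p$-orbit of parallel fibers, the identification of the result with a single knot surgery using $\#_pK$ (Example 1.3 in \cite{C}), and an application of Theorem 1.5 --- with your observation that $\#_pK$ is composite, hence not a torus knot, being precisely the point that lets the paper assume only that $K$ is nontrivial. The remaining items (1), (3), (4) are handled just as the paper intends, via Mayer--Vietoris, the Fintushel--Stern symplectic structure with $c_1(K_{X_{K'}})=c_1(K_X)+2g(\#_pK)[T]$, and the knot surgery formula together with the remark after Theorem 1.5.
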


\noindent{\bf Remarks:}\hspace{2mm}
In \cite{C} the author considered an extension of the classical Hurwitz theorem on automorphisms 
of surfaces to $4$ dimensions, where bounds for the order of periodic diffeomorphisms 
of $4$-manifolds admitting no smooth circle actions were investigated.
It was shown that the order of a holomorphic 
$\Z_p$-action is bounded by a constant depending only on the integral homology 
of the manifold, however, in the symplectic case the bound was shown to depend in addition on $c_1(K_X)\cdot [\omega]$, a term reflecting the smooth and symplectic structures of the 
$4$-manifold. Examples were given in \cite{C} which show that the involvement of 
$c_1(K_X)\cdot [\omega]$ is indeed necessary, at least in the case of $b_2^{+}=1$.  With this understood, note that Corollary 1.6(3) gives further examples for the case of $b_2^{+}>1$. Furthermore, Corollary 1.6(4) shows that the integral homology and the Seiberg-Witten invariants alone are insufficient in bounding the order of a smooth $\Z_p$-action on a $4$-manifold, and
that certain measurement for the complexity of fundamental group must also be involved. 

\subsection{Symplectic Seifert fibered $4$-manifolds}

Concerning smooth classification of Seifert fibered $4$-manifolds which admit a symplectic
structure, we have the following conjecture.

\begin{conjecture}
Let $X$ be a symplectic $4$-manifold and $\pi:X\rightarrow Y$ be a Seifert fibration. 
Then $Y=\tilde{Y}/G$, where $\tilde{Y}$ is a fibered $3$-manifold, and $G$ is a finite group 
acting on $\tilde{Y}$ which preserves a fibration $\tilde{Y}\rightarrow\s^1$ such that the induced action on $\s^1$ is orientation-preserving.
\end{conjecture}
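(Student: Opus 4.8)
Since $X$ is symplectic, Taubes' theorem \cite{T1} gives $SW_X\neq 0$, so Theorem 1.1 applies to the Seifert fibration $\pi:X\rightarrow Y$. As recorded after the statement of Theorem 1.1, its proof already produces a regular finite manifold cover $Y=\tilde Y/G$ together with a circle bundle $\tilde\pi:\tilde X\rightarrow\tilde Y$ finitely covering $X$. The covering projection $\tilde X\rightarrow X$ pulls the symplectic form back to a form $\tilde\omega$ on $\tilde X$ which, being a pull-back from the quotient, is automatically $G$-invariant; thus $\tilde X$ is a symplectic $4$-manifold, $G$ acts on it by symplectomorphisms, and the induced $G$-action on $\tilde Y$ preserves the bundle structure. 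This reduces the conjecture to the \emph{equivariant} statement that $\tilde Y$ admits a $G$-invariant fibration over $\s^1$ on which $G$ acts by orientation-preserving maps of the base.

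The first, non-equivariant, half of this reduction is the assertion that the base of a symplectic circle bundle over a closed oriented $3$-manifold must fiber over $\s^1$. In the product case $\tilde X=\s^1\times\tilde Y$ this is exactly the Taubes conjecture, settled by Friedl and Vidussi: $\s^1\times\tilde Y$ is symplectic if and only if $\tilde Y$ fibers over $\s^1$. Their proof proceeds through Seiberg-Witten theory, relating the basic classes of the $4$-manifold to the Thurston norm of $\tilde Y$ and locating the canonical class in a fibered cone. For the general circle bundle $\tilde X$ at hand one would run the same Seiberg-Witten input, now feeding in the nonvanishing of $SW_X$ supplied by Theorem 1.1, to conclude that $\tilde Y$ is fibered; extending the Friedl--Vidussi argument from products to circle bundles of arbitrary Euler class is the first point that requires care.

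The hard part, and the reason the statement is posed as a conjecture, is upgrading a fibration $\tilde Y\rightarrow\s^1$ to a $G$-invariant one. The fibrations of $\tilde Y$ over $\s^1$ are parametrized by the primitive integral classes lying in the open fibered cones over the fibered faces of the Thurston norm ball of $\tilde Y$, and $G$ acts on $H^1(\tilde Y;\R)$ by isometries of the Thurston norm, hence permutes these fibered faces. One cannot simply average a fibered class, because the fibered cones are open and an average may leave the cone or land on a non-fibered face; instead one must exhibit a single fibered face that is setwise fixed by $G$ and whose interior contains a $G$-invariant class. I expect this selection problem to be the principal obstacle.

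Finally, the orientation-preserving condition on the induced $\s^1$-action should be read off once the invariant fibered class $\alpha\in H^1(\tilde Y;\Z)$ is in hand: the coorientation of the fibers determined by $\alpha$ is pinned down by the symplectic orientation of $\tilde X$, which $G$ preserves, so $G$ fixes $\alpha$ up to positive scaling rather than sending it to $-\alpha$, and hence acts on the base $\s^1$ orientation-preservingly. The delicate point is that this coorientation bookkeeping must be carried out simultaneously with the selection of the $G$-invariant fibered face in the previous step, rather than as an afterthought.
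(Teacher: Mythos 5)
You were asked about Conjecture 1.7, and the paper offers no proof of it --- it is stated as an open conjecture, with the paper itself noting (in the remarks following Proposition 1.8) that it remains unresolved precisely when $X$ has Kodaira dimension $1$ and $\text{rank }z(\pi_1(X))=1$. Your proposal, appropriately, does not claim a full proof; what it gets right is the reduction, which coincides with how the paper opens its proof of the partial result, Proposition 1.8: Taubes \cite{T1} gives $SW_X\neq 0$ (note this comes from \cite{T1} directly --- Theorem 1.1 \emph{uses} nonvanishing as a hypothesis, it does not supply it), Lemma 2.3 plus orbifold geometrization give $Y=\tilde{Y}/G$, and the pull-back bundle $\tilde{X}\rightarrow\tilde{Y}$ carries a $G$-invariant symplectic form. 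But both of the remaining steps you sketch are genuinely open, not merely delicate. The non-equivariant step --- that the base of a symplectic circle bundle fibers over $\s^1$ --- is a theorem only in the product case \cite{FV}; for nontrivial Euler class it is itself a conjecture with only partial results (\cite{Bald1, Bow, FV1}), and nothing in the construction makes $\tilde{\pi}:\tilde{X}\rightarrow\tilde{Y}$ a product, so you cannot invoke Friedl--Vidussi as stated. For the equivariant step you correctly name the obstacle (a $G$-invariant class in the interior of a single fibered face of the Thurston norm ball) but supply no mechanism for resolving it. Your closing coorientation argument is also unjustified: $G$-invariance of the symplectic orientation of $\tilde{X}$ does not by itself pin down a coorientation of a fibration of $\tilde{Y}$, since the fibered class is not canonically associated to the symplectic form without further Seiberg--Witten/Thurston-norm input.

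For comparison, here is how the paper's Proposition 1.8 extracts a theorem from this circle of ideas: it adds the hypothesis that $K_X$ is torsion, whence $K_{\tilde{X}}=pr^\ast K_X$ is torsion and \cite{FV1} forces $\tilde{Y}$ to be a $T^2$-bundle over $\s^1$. This is rigid enough that the Thurston-norm selection problem never arises: the $G$-invariant fibration, with orientation-preserving induced action on the base, is obtained by a case analysis on $b_1(\tilde{Y})\in\{1,2,3\}$, using Edmonds--Livingston \cite{EL} (Theorem 5.2, verified via $H_1(\tilde{Y};\Q)^G=\Q$) when $b_1=1$, and Meeks--Scott \cite{MS} (reduction to fiber-preserving, respectively linear, actions) when $b_1=2,3$. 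The paper thus trades generality for the rigidity of $T^2$-bundles, sidestepping exactly the two gaps in your outline; closing those gaps in general is the content of the conjecture, not something your proposal (or the paper) achieves.
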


\noindent{\bf Remarks:}\hspace{1mm}
(1) Seminal works of Taubes (concerning near symplectic structure \cite{Tau}) and Kronheimer 
(on minimal genus \cite{Kron}) led to the following conjecture: If $\s^1\times M^3$ is symplectic,
$M^3$ must be fibered over $\s^1$. First progress was made in \cite{CM} where the conjecture
was confirmed under a stronger assumption, i.e., $\s^1\times M^3$ admits a symplectic
Lefschetz fibration. After a series of partial results, the conjecture was finally resolved by Friedl
and Vidussi in \cite{FV}. The extension of the conjecture to circle bundles over $3$-manifold
was considered in \cite{Bald1} (the question was raised even earlier in \cite{FGM}), and partial
results were obtained in \cite{Bow} and \cite{FV1}. 

(2) Conjecture 1.7 is true if the fixed-point free circle action on $X$ preserves the symplectic 
structure. This follows easily from a generalized moment map argument, see \cite{McD, FGM}. 

(3) Conjecture 1.7 also holds true if $\text{rank }z(\pi_1(X))>1$. It is shown in \cite{C2} that
any Seifert fibration on such a $4$-manifold may be extended to a principal $T^2$-bundle over 
a $2$-orbifold, which can be given a compatible complex structure. If the complex structure 
is non-K\"{a}hler, then $X$ can not be symplectic unless it is a primary Kodaira surface (cf.
\cite{Biq}, also \cite{C1}), and in this case the circle action is free and $Y$ is fibered. If $X$ 
is K\"{a}hler, then $X=(T^2\times \Sigma)/G$ for some free finite group action of $G$ preserving 
the product structure on $T^2\times \Sigma$, cf. \cite{FM}, Theorem 7.7 in p. 200, in which case
$\tilde{Y}=\s^1\times \Sigma$ with the $G$-invariant fibration $pr_1: \s^1\times \Sigma\rightarrow
\s^1$.

\vspace{2mm}

Generalizing the work of Friedl-Vidussi \cite{FV1}, we obtain the following result.

\begin{proposition}
Let $X$ be a symplectic $4$-manifold and $\pi:X\rightarrow Y$ be a Seifert fibration. Then there
exists a $3$-manifold $\tilde{Y}$ and a finite group action of $G$ such that $Y=\tilde{Y}/G$.
Furthermore, if the canonical class $K_X$ is torsion, then $\tilde{Y}$ is fibered, and there is a 
$G$-invariant fibration $\tilde{Y}\rightarrow\s^1$ such that the induced $G$-action on the 
base $\s^1$ is orientation-preserving.
\end{proposition}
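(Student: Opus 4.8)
The plan is to read off the first assertion from the proof of Theorem 1.1 and then to establish the fibering of $\tilde{Y}$ by lifting the symplectic structure to the finite cover $\tilde{X}$ and adapting the Seiberg-Witten-theoretic fibering criterion of Friedl-Vidussi \cite{FV1}, all the while tracking $G$-equivariance. For the first assertion, note that a symplectic $4$-manifold has $b_2^{+}\geq 1$ (as $[\omega]^2>0$) and, by Taubes \cite{T1}, nonzero Seiberg-Witten invariant; hence Theorem 1.1 applies and its proof furnishes a regular finite manifold cover $Y=\tilde{Y}/G$ together with the pulled-back fibration $p:\tilde{X}\rightarrow\tilde{Y}$, an honest circle bundle over the $3$-manifold $\tilde{Y}$ with $\tilde{X}\rightarrow X$ a finite covering. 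Since this covering is compatible with the fiberwise $\s^1$-action, the deck group $G$ commutes with that action and, being orientation-preserving, also preserves the fiber orientation.

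For the second assertion I would transport the symplectic data upstairs. The pullback $\tilde{\omega}$ of $\omega$ is a $G$-invariant symplectic form on $\tilde{X}$, and $K_{\tilde{X}}$ is the pullback of $K_X$, hence torsion. Integrating over the $\s^1$-fiber in the Gysin sequence of $p$ yields a class $\phi\equiv p_{\ast}[\tilde{\omega}]\in H^1(\tilde{Y};\R)$. This class is nonzero: if $\phi=0$ then $[\tilde{\omega}]=p^{\ast}\alpha$ for some $\alpha\in H^2(\tilde{Y})$, whence $[\tilde{\omega}]^2=p^{\ast}(\alpha^2)=0$ because $H^4(\tilde{Y})=0$, contradicting $[\tilde{\omega}]^2>0$. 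Moreover $\phi$ is $G$-invariant \emph{with its sign}, since both $[\tilde{\omega}]$ and the fiber orientation are $G$-invariant; this is exactly what will force the induced action on the base $\s^1$ to be orientation-preserving once $\phi$ is known to be fibered.

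The heart of the matter, and the main obstacle, is to show that $\phi$ lies in the open cone over a fibered face of the Thurston norm ball of $\tilde{Y}$. Here I would generalize Friedl-Vidussi \cite{FV1} to the present nonproduct circle bundle: Baldridge's formula \cite{Bald2} expresses $SW_{\tilde{X}}$ through the Reidemeister-Turaev torsion and twisted Alexander invariants of $\tilde{Y}$, and Taubes' identity $SW=Gr$ \cite{T1} guarantees that these invariants detect the Thurston norm in the direction determined by $[\tilde{\omega}]$. The torsion of $K_X$ is decisive precisely here: it makes $p_{\ast}K_{\tilde{X}}$ a torsion, hence rationally trivial, class in $H^1(\tilde{Y})$, so that the undisplaced norm-detecting direction is the symplectic class $\phi$ itself and the relevant twisted Alexander polynomials come out monic; the fibering criterion of Friedl-Vidussi, which rests on subgroup separability and Geometrization (cf. \cite{FV,P}), then places $\phi$ in the interior of a fibered cone. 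I expect the delicate step to be pushing the Friedl-Vidussi detection from products $\s^1\times\tilde{Y}$ to a genuine circle bundle and confirming that the \emph{specific} $G$-invariant class $\phi$---not merely some class in the fibered cone---is the one realized; without the torsion hypothesis the detected direction could be shifted by $p_{\ast}K_{\tilde{X}}$ and would no longer be pinned to $\phi$.

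Finally, the fibered cone is open, is $G$-invariant (the Thurston norm being a diffeomorphism invariant), and contains the $G$-invariant class $\phi$; since $H^1(\tilde{Y};\Q)^{G}$ is dense in $H^1(\tilde{Y};\R)^{G}$, the cone therefore contains a $G$-invariant primitive integral class $\phi'$. This $\phi'$ defines a fibration $\tilde{Y}\rightarrow\s^1$, and the relation $g^{\ast}\phi'=\phi'$ for all $g\in G$ shows both that the fibration is $G$-invariant and that each $g$ covers an orientation-preserving self-map of the base $\s^1$, completing the proof.
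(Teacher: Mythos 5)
Your first step matches the paper: Taubes gives nonzero Seiberg--Witten invariant, Lemma 2.3 makes $Y$ pseudo-good, and orbifold geometrization yields $Y=\tilde{Y}/G$ with the pulled-back circle bundle $\tilde{X}\rightarrow\tilde{Y}$; your Gysin-sequence argument that $\phi=p_{\ast}[\tilde{\omega}]\neq 0$ is also sound. But the second half diverges from the paper in a way that leaves real gaps. What you label ``the heart of the matter''---extending the Friedl--Vidussi detection from products to circle bundles---is not something the paper proves, nor something your sketch proves: the cited result \cite{FV1} already applies to $4$-manifolds with free circle actions and torsion canonical class, and its conclusion is much stronger than ``the symplectic direction is fibered'': it says that $\tilde{Y}$ is a $T^2$-bundle over $\s^1$. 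The paper simply quotes this. Your replacement sketch (Baldridge's formula \cite{Bald2}, ``$SW=Gr$ detects the Thurston norm,'' monic twisted Alexander polynomials, separability) is a research program rather than an argument, and it conflates norm detection with the far deeper fiberedness detection of \cite{FV}. Moreover, for the manifolds that actually occur here (torus bundles: $b_1(\tilde{Y})\in\{1,2,3\}$) the fibered classes have vanishing Thurston norm, so the ``open cone over a fibered face'' formalism you lean on---in particular the convexity of the cone that your $G$-averaging step silently uses---degenerates and would need separate justification.

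The decisive gap is your final sentence: the relation $g^{\ast}\phi'=\phi'$ for a fibered class $\phi'$ does \emph{not} show that the fibration it defines is $G$-invariant. Invariance of the cohomology class only says that $f\circ g$ and $f$ are homotopic as maps to $\s^1$; upgrading the action to an honestly fiber-preserving one is a nontrivial equivariant problem, and it is exactly where the paper's real work lies. Using the torus-bundle structure from \cite{FV1}, the paper splits into the cases $b_1(\tilde{Y})=1,2,3$ and invokes Edmonds--Livingston \cite{EL} (Theorem 5.2, after verifying the hypothesis $H_1(\tilde{Y};\Q)^G=\Q$) together with Meeks--Scott \cite{MS} on finite group actions on geometric $3$-manifolds to conjugate the $G$-action to one preserving a fibration; in the case $\tilde{Y}=T^3$ it must additionally use that the singular set of $Y$ is $1$-dimensional, since---as the paper notes---there exist finite linear actions on $T^3$ preserving no product structure $\s^1\times T^2$ at all. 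That example shows the passage from an invariant class to an invariant fibration is genuinely obstructed without geometric input, and your proposal contains no substitute for this equivariant step, so the argument as written does not close.
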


\noindent{\bf Additional Remarks:}\hspace{1mm}
It was shown in \cite{Bald3} that if a symplectic $4$-manifold admits a smooth circle action
with nonempty fixed-points, it must be diffeomorphic to a rational or ruled surface. Thus
the smooth classification of symplectic $4$-manifolds with smooth circular symmetry hinges on
the resolution of Conjecture 1.7, which remains open in the following case: $X$ has Kodaira
dimension $1$ (cf. \cite{Li}), and $\text{rank }z(\pi_1(X))=1$ (or equivalently, $X$ supports
no complex structures). 

\vspace{2mm}

\noindent{\bf Acknowledgments}: I wish to thank Ron Fintushel for communications regarding the
knot surgery formula for Seiberg-Witten invariants, Dieter Kotschick for bringing to my attention 
related works in the literature, and Inanc Baykur for interesting comments. This work was initiated during a visit to the Max Planck Institute for Mathematics, Bonn, in Summer 2010. The paper was completed in the current form during my sabbatical leave in Fall 2011 at the Institute for Advanced Study, which was supported by a grant from The S.S. Chern Fund and by NSF grant DMS-0635607. 
I wish to thank both institutes for their hospitality and financial support.

\section{New constraints of Seifert fibered $4$-manifolds}

\begin{lemma}
Suppose $X$ has nonzero Seiberg-Witten invariant. Let $\pi:X\rightarrow Y$ be any Seifert
fibration where $b_1(Y)>1$. Then the $3$-orbifold $Y$ has nonzero Seiberg-Witten invariant.
\end{lemma}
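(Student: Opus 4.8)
The plan is to derive the non-vanishing of the $3$-orbifold Seiberg-Witten invariant of $Y$ directly from Baldridge's computation in \cite{Bald2}, interpreted through the orbifold invariant $SW_Y$ defined in \cite{C1}. The point of departure is that the Seifert fibration $\pi:X\rightarrow Y$ presents $X$ as an orbifold circle bundle over the closed oriented $3$-orbifold $Y$. In this situation a dimensional-reduction principle applies: an $\s^1$-invariant solution of the Seiberg-Witten equations on $X$ descends to a solution of the corresponding equations on $Y$, and Baldridge shows that in the relevant limit the invariant solutions exhaust $SW_X$. Thus I would use his formula to express $SW_X(\L)$ as a signed count assembled from the values $SW_Y(\mathfrak{s})$, where $\mathfrak{s}$ runs over the $Spin^c$-structures on $Y$ whose pullback along $\pi$ is compatible with $\L$.

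The hypothesis $b_1(Y)>1$ enters precisely to make $SW_Y$ a well-defined integer, independent of metric and perturbation; for $b_1(Y)=0$ or $1$ a chamber structure appears (and for $b_1(Y)=0$ a Casson--Walker type correction as well) that would obstruct any clean statement. I would record this normalization at the outset, since it is the feature that makes the phrase ``$Y$ has nonzero Seiberg-Witten invariant'' meaningful.

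Granting the formula, the conclusion is immediate: non-vanishing of $SW_X$ supplies a $Spin^c$-structure $\L$ with $SW_X(\L)\neq 0$, whence the formula forces at least one contributing term $SW_Y(\mathfrak{s})$ to be nonzero, so that $Y$ has nonzero Seiberg-Witten invariant.

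The main difficulty I anticipate is bookkeeping rather than analysis. One must verify that the orbifold invariant implicit in \cite{Bald2} is normalized to agree with $SW_Y$ as defined in \cite{C1}, and one must understand the correspondence $\L\mapsto\mathfrak{s}$ well enough to rule out a spurious cancellation of an otherwise nonzero sum. A related point requiring care is the Euler class $e$ of the circle bundle: a Gysin-sequence computation gives $b_2^{+}(X)=b_1(Y)$ when $e$ is torsion and $b_2^{+}(X)=b_1(Y)-1$ when $e$ is non-torsion, so when $e$ is non-torsion the regular fiber is rationally null-homologous and there is a degree shift in the reduction, and in the borderline case $b_1(Y)=2$ one lands in the $b_2^{+}=1$ regime where $SW_X$ must be read using the chamber convention of the introduction. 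I would therefore confirm that Baldridge's formula is applied in its correct form in each of these cases, all of which remain consistent with $b_1(Y)>1$.
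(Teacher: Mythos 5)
Your reduction is exactly the paper's argument in the case $b_2^{+}(X)>1$: there Baldridge's Theorem C both asserts that any basic class satisfies $\L=\pi^\ast\L_0$ and gives $SW_X(\L)=\sum_{\L'\equiv\L_0\bmod\chi}SW_Y(\L')$, so a nonzero left-hand side forces some nonzero summand. (Your worry about ``spurious cancellation'' is moot in this direction: you only need one nonzero term, not the converse implication that nonzero $SW_Y$ implies nonzero $SW_X$.) The genuine gap is precisely in the case you flag and then defer: $b_2^{+}(X)=1$. There the Euler class of $\pi$ is necessarily non-torsion (a torsion Euler class would give $b_2^{+}(X)=b_1(Y)>1$), so $b_1(Y)=2$, and Baldridge's formula (his Corollary 25) is available \emph{only} for $Spin^c$-structures of the form $\L=\pi^\ast\L_0$. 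Unlike the $b_2^{+}>1$ case, nothing in Baldridge guarantees that a chamber-dependent basic class on $X$ pulls back from $Y$; saying you ``would confirm that Baldridge's formula is applied in its correct form'' names the problem without the idea needed to solve it, and without that idea the argument collapses in the $b_2^{+}=1$ regime.

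The paper closes this gap with a separate input your proposal lacks. First, by Baldridge's Theorem 9, an element of $H^2(X;\Z)$ is the $c_1$ of a pullback from $Y$ if and only if, mod torsion, it is a multiple of the Poincar\'e dual of a vertical torus $T=\pi^{-1}(\gamma)$, where $\gamma$ is a suitable loop in the complement of the singular set. Second, to show every basic class has this form, the paper invokes Taubes's product formula for $4$-manifolds with essential tori (\cite{T2}, Theorem 2.7): the boundary $N$ of a regular neighborhood of $T$ is an essential $3$-torus splitting $X$ into $X_{+}$ and $X_{-}$, the theorem expresses $SW_X(\L)$ through the Seiberg--Witten invariants of the pieces, and in particular $c_1(\L)=x+y$ with $x$, $y$ in the images of $H^2(X_{+},N;\Z)$ and $H^2(X_{-},N;\Z)$ in $H^2(X;\Z)$, both of which are generated mod torsion by the Poincar\'e dual of $T$. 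Hence $\L=\pi^\ast\L_0$ and the formula applies. This step is the essential content of the lemma when $b_2^{+}=1$, and your proof would need to supply it (or an equivalent argument) to be complete.
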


\begin{proof}
Let $\L$ be a $Spin^c$-structure such that $SW_X(\L)\neq 0$. If $b_2^{+}>1$, then $\L=\pi^\ast
\L_0$ for some $Spin^c$-structure $\L_0$ on $Y$, and moreover
$$
SW_X(\L)=\sum_{\L^\prime\equiv \L_0\mod{\chi }}SW_Y(\L^\prime),
$$
where $\chi$ stands for the Euler class of $\pi$, cf. Baldridge \cite{Bald2}, Theorem C. It follows
readily that $Y$ has nonzero Seierg-Witten invariant in this case.

When $b_2^{+}=1$ and $b_1(Y)>1$, the above formula continues to hold  (cf. \cite{Bald2}, 
Corollary 25) as long as $\L=\pi^\ast\L_0$. Thus it remains to show that $\L=\pi^\ast\L_0$ 
for some $Spin^c$-structure $\L_0$ on $Y$.

In order to see this, we observe first that $H^2(X;\Z)/Tor$ has rank $2$, and any torsion element of 
$H^2(X;\Z)$ is the $c_1$ of the pull-back of an orbifold complex line bundle on $Y$.
Moreover, there exists an embedded loop $\gamma$ lying in the complement of the singular set 
of $Y$, such that an element of $H^2(X;\Z)$ is the $c_1$ of the pull-back of an 
orbifold complex line bundle on $Y$ if and only if it is a multiple of the Poincar\'{e} dual of 
the $2$-torus $T\equiv \pi^{-1}(\gamma)\subset X$ in $H^2(X;\Z)/Tor$, see Baldridge \cite{Bald2}, Theorem 9.  With this understood, $\L=\pi^\ast \L_0$ for some $Spin^c$-structure $\L_0$ on $Y$ if and only if $c_1(\L)$ is Poincar\'{e} dual to a multiple of $T$ over $\Q$. 

We shall prove this using the product formula of Seiberg-Witten invariants in Taubes \cite{T2}. To 
this end, we let $N$ be the boundary of a regular neighborhood of $T$ in $X$. Then $N$ is a 
$3$-torus, essential in the sense of \cite{T2}, which splits $X$ into two pieces $X_{+}$, $X_{-}$. 
With this understood, Theorem 2.7 of \cite{T2} asserts that $SW_X(\L)$ can be expressed as a 
sum of products of the Seiberg-Witten invariants of $X_{+}$ and $X_{-}$. In particular, $c_1(\L)$ 
is expressed as a sum of $x$, $y$, where $x,y$ are in the images of $H^2(X_{+},N;\Z)$ and $H^2(X_{-},N;\Z)$ in $H^2(X;\Z)$ respectively. It is easily seen that mod torsion elements both are generated by the Poincar\'{e} dual of $T$. Hence the claim $\L=\pi^\ast \L_0$.

\end{proof}

\begin{corollary}
Let $\pi:X\rightarrow Y$ be a Seifert fibration where $b_1(Y)>1$. If the underlying $3$-manifold 
$|Y|$ contains a non-separating $2$-sphere, then $X$ has vanishing Seiberg-Witten invariant. 
\end{corollary}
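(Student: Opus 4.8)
The plan is to prove the contrapositive: assuming $SW_X\neq 0$, I will deduce that $|Y|$ cannot contain a non-separating $2$-sphere. The strategy is to transfer the non-vanishing down the tower $X\to Y\to |Y|$ using the two facts already in hand, namely Lemma 2.1 and the de-singularization formula quoted in the introduction, and then to contradict a standard $3$-dimensional vanishing result.

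First, since $b_1(Y)>1$ by hypothesis, Lemma 2.1 applies verbatim and gives that the $3$-orbifold $Y$ has nonzero Seiberg-Witten invariant. Next I would remove the orbifold structure. The singular set of $Y$ is a finite disjoint union of embedded circles; I would delete them one at a time, applying the de-singularization formula at each step. This is legitimate because deleting a circle from the singular set changes only the orbifold structure along that circle and leaves the underlying space untouched, so the rational first Betti number stays equal to $b_1(|Y|)=b_1(Y)>1$ throughout, and the hypothesis of the formula is preserved at every stage. After finitely many steps the singular set is empty, and I conclude that $|Y|$, viewed as a $3$-orbifold with trivial orbifold structure, i.e. as a closed oriented $3$-manifold, has nonzero Seiberg-Witten invariant.

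Finally I would invoke the topological hypothesis to reach a contradiction. A non-separating $2$-sphere in $|Y|$ exhibits a connected-sum splitting $|Y|\cong N\#(\s^1\times\s^2)$, and for a closed oriented $3$-manifold carrying an $\s^1\times\s^2$ summand the Seiberg-Witten invariant vanishes identically; under the Meng--Taubes identification of the $3$-dimensional Seiberg-Witten invariant with the Milnor--Turaev torsion, this is the statement that the Alexander polynomial of such a manifold is zero. This contradicts the non-vanishing obtained above, completing the proof. I expect this last step to be the only real obstacle: the reduction carried out in the first two paragraphs is formal once the two quoted results are granted, whereas the vanishing for $3$-manifolds with a non-separating sphere is the substantive input, and is the place where I would need either to cite the torsion computation carefully or to supply a short direct argument (for instance via the torsion of the infinite cyclic cover coming from the $\s^1\times\s^2$ factor).
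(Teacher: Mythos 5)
Your proposal is correct and matches the paper's proof step for step: Lemma 2.1 to pass from $X$ to the orbifold $Y$, the de-singularization formula (applied once per singular circle, with $b_1$ unchanged since the underlying space is unchanged) to pass to $|Y|$, and then the vanishing of the three-dimensional Seiberg--Witten invariant for the connected sum $|Y|=Y_1\#(\s^1\times\s^2)$ produced by the non-separating sphere. One small caution on your final step: the blanket claim that any closed oriented $3$-manifold with an $\s^1\times\s^2$ summand has identically vanishing invariant fails for $\s^1\times\s^2$ itself (its Alexander polynomial is $1$ and its Milnor--Turaev torsion is nonzero), so, as the paper makes explicit by recording $b_1(Y_1)=b_1(Y)-1>0$, the hypothesis $b_1(Y)>1$ must be invoked once more here to guarantee both summands have positive $b_1$, which is what makes the torsion/Alexander-polynomial vanishing (and hence the Meng--Taubes vanishing, with no chamber issues since $b_1(|Y|)\geq 2$) go through.
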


\begin{proof}
If $X$ has nonzero Seiberg-Witten invariant, so does $Y$ by Lemma 2.1. By the 
de-singularization formula in \cite{C1}, the underlying $3$-manifold $|Y|$ also has nonzero
Seiberg-Witten invariant. But if $|Y|$ contains a non-separating $2$-sphere, we have a
decomposition $|Y|=Y_1\# \s^1\times\s^2$ where $b_1(Y_1)=b_1(Y)-1>0$. This implies 
that $|Y|$ has vanishing Seiberg-Witten invariant, which is a contradiction.

\end{proof}

Recall that a $3$-orbifold is called {\it pseudo-good} if it contains no bad $2$-suborbifold. 

\begin{lemma}
Let $X$ be a Seifert fibered $4$-manifold with Seifert fibration 
$\pi:X\rightarrow Y$. Then $Y$ is pseudo-good if either the Euler class of $\pi$ is torsion,
or $X$ has nontrivial Seiberg-Witten invariant.
\end{lemma}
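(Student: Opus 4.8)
The plan is to argue by contradiction, assuming $Y$ contains a bad $2$-suborbifold $F$, and to treat the two hypotheses separately. Recall that a bad closed $2$-orbifold is either a teardrop $\s^2(p)$ or a spindle $\s^2(p,q)$ with $p\neq q$. Since the singular set of $Y$ is a disjoint union of embedded circles carrying cyclic isotropy, such an $F$ has underlying space $|F|\cong\s^2$ meeting the singular circles transversally: in exactly one point (on a circle of order $p$) in the teardrop case, and in exactly two points (on circles of orders $p\neq q$) in the spindle case. In either case $M_F\equiv\pi^{-1}(F)$ is a closed $3$-manifold, Seifert fibered over $F$ by the induced circle action.

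First I would dispose of the case where the Euler class of $\pi$ is torsion. The orbifold Euler number $e$ of the Seifert fibration $M_F\rightarrow F$ equals the evaluation of the restricted rational Euler class of $\pi$ on the orbifold fundamental class of $F$; since the Euler class of $\pi$ is torsion, this restriction vanishes in $H^2(F;\Q)$, whence $e=0$. I would then observe that neither a teardrop nor a spindle with distinct cone orders admits a Seifert fibration with vanishing Euler number. Writing the normalized Seifert invariants over $\s^2$ as $\{b;(\alpha_1,\beta_1)\}$ (teardrop, $\alpha_1=p$) or $\{b;(\alpha_1,\beta_1),(\alpha_2,\beta_2)\}$ (spindle, $\alpha_1=p$, $\alpha_2=q$) with $\gcd(\alpha_i,\beta_i)=1$, one has $e=-(b+\sum_i\beta_i/\alpha_i)$. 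Setting $e=0$ forces $\alpha_1\mid\beta_1$ in the teardrop case, contradicting $\gcd(\alpha_1,\beta_1)=1$; in the spindle case, clearing denominators and reducing modulo $\alpha_1$ and $\alpha_2$ gives $\alpha_1\mid\alpha_2$ and $\alpha_2\mid\alpha_1$, forcing $p=q$, again a contradiction. Hence no bad $F$ exists and $Y$ is pseudo-good.

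It remains to treat the case where the Euler class is not torsion; then by hypothesis $X$ has nonzero Seiberg-Witten invariant, so $b_2^{+}(X)\geq 1$. Here I would first extract the bound $b_1(Y)>1$ from the rational Gysin sequence of $\pi$: when the Euler class is non-torsion, cup product with it makes $\pi^\ast H^2(Y;\Q)$ a totally isotropic subspace of $H^2(X;\Q)$ of dimension $b_1(Y)-1$, whence, using that the signature of $X$ vanishes, $b_2^{+}(X)=b_1(Y)-1$, so that $b_2^{+}(X)\geq 1$ gives $b_1(Y)>1$. Next I would note that $|F|\cong\s^2$ is an embedded sphere in $|Y|$ whose mod-$2$ intersection number with at least one of the singular circles is odd (one transverse point in the teardrop case, one point on each of two circles in the spindle case), so $|F|$ is non-separating in $|Y|$. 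Corollary 2.2 then forces $X$ to have vanishing Seiberg-Witten invariant, contradicting the hypothesis; hence $Y$ is pseudo-good in this case as well.

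I expect the main obstacle to be the bookkeeping in the torsion case: correctly reading off the local Seifert data of $M_F\rightarrow F$ from the transverse intersections of $F$ with the singular circles, and confirming that the restricted rational Euler class indeed computes the orbifold Euler number, so that the elementary congruence argument applies. The Gysin computation of $b_2^{+}(X)$ in terms of $b_1(Y)$, and its use to secure the hypothesis $b_1(Y)>1$ of Corollary 2.2, is the second point requiring care.
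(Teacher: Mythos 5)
Your proposal is correct and follows essentially the same route as the paper: evaluate the rational Euler class on the bad sphere, use coprimality of the local Seifert invariants together with $p\neq q$ to show the evaluation $b+\sum_i\beta_i/\alpha_i$ cannot vanish (so the Euler class is non-torsion), then in the remaining case invoke $b_1(Y)=b_2^{+}(X)+1>1$ and the non-separating sphere to contradict nonzero Seiberg--Witten invariant via Corollary 2.2. The only cosmetic differences are that you package the evaluation as the orbifold Euler number of the Seifert fibered $3$-manifold $\pi^{-1}(F)\rightarrow F$ and supply a Gysin-sequence proof of $b_2^{+}(X)=b_1(Y)-1$, which the paper simply cites from Baldridge \cite{Bald2}.
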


\begin{proof}
We shall show that if $Y$ has a bad $2$-suborbifold $\Sigma$, then the Euler class of 
$\pi$ must be non-torsion and $X$ has vanishing Seiberg-Witten invariant. By definition, 
$\Sigma$ is an embedded $2$-sphere in the underlying $3$-manifold $|Y|$, such that either 
$\Sigma$ contains exactly one singular point of $Y$ or $\Sigma$ contains two singular points of different multiplicities. For simplicity, we let $p_1,p_2$ be the singular points on $\Sigma$, with 
$p_1, p_2$ contained in the components $\gamma_1, \gamma_2$ of the singular set of $Y$, with multiplicities $\alpha_1,\alpha_2$ respectively. We assume that $\alpha_1<\alpha_2$, 
with $\alpha_1=1$ representing the case where $\Sigma$ contains only one singular point 
$p_2$. Note that since $\alpha_1\neq \alpha_2$, $\gamma_1, \gamma_2$ are distinct.
 
We first show that the Euler class of $\pi$ must be non-torsion. Let $e(\pi)$ denote the image of
the Euler class of $\pi$ in $H^2(|Y|;\Q)$.
Then 
$$
\int_{|\Sigma|} e(\pi)=b+\beta_1/\alpha_1+\beta_2/\alpha_2, 
$$
where $b\in\Z, 1\leq \beta_1<\alpha_1$ or $\alpha_1=\beta_1=1$,  and 
$1\leq \beta_2<\alpha_2$ with $\text{gcd }(\alpha_2,\beta_2)=1$. 
It follows from the fact that $\alpha_1<\alpha_2$ and 
$\alpha_2, \beta_2$ are co-prime that $\int_{|\Sigma|} e(\pi)\neq 0$.
Hence the Euler class of $\pi$ is non-torsion. 

Now note that $b_1(Y)=b_2^{+}+1>1$ since the Euler class of $\pi$ is non-torsion (cf. \cite{Bald2}).
On the other hand, clearly $|\Sigma|$ is a non-separating $2$-sphere in $|Y|$. By Corollary 2.2,
$X$ has vanishing Seiberg-Witten invariant.

\end{proof}

\noindent{\bf Proof of Theorem 1.1}

\vspace{1mm}

We first introduce some notations. 
Let $y_0\in Y$ be a regular point and $F$ be the fiber of the Seifert fibration
over $y_0$, and fix a point $x_0\in F$. Denote by $i: F\hookrightarrow X$ the inclusion. 
We shall prove that $i_\ast: \pi_1(F,x_0)\rightarrow \pi_1(X,x_0)$ is injective first.

By Lemma 2.3, $Y$ is pseudo-good. As a corollary of the proven Thurston's Geometrization 
Conjecture for $3$-orbifolds (cf. \cite{BLP,MM}), $Y$ is very good, i.e, there is a $3$-manifold 
$\tilde{Y}$ with a finite group action $G$ such that $Y=\tilde{Y}/G$. Let $pr:\tilde{Y}\rightarrow Y$ 
be the quotient map, and let $\tilde{X}$ be the $4$-manifold which is the total space of the 
pull-back circle bundle of $\pi:X\rightarrow Y$ via $pr$.  Then $\tilde{X}$ has a natural free 
$G$-action such that $X=\tilde{X}/G$. We fix a point $\tilde{y}_0\in\tilde{Y}$ in the pre-image 
of $y_0$, and let $\tilde{F}\subset \tilde{X}$ be the fiber over $\tilde{y}_0$. We fix a 
$\tilde{x}_0 \in \tilde{F}$ which is sent to $x_0$ under $\tilde{X}\rightarrow X$.
Then consider the following commutative diagram 
$$
\begin{array}{llllllll}
\rightarrow & \pi_2(\tilde{Y},\tilde{y}_0) & \stackrel{\tilde{\delta}}{\rightarrow} & \pi_1(\tilde{F}, \tilde{x}_0) &
\stackrel{\tilde{i}_\ast}{\rightarrow} & \pi_1(\tilde{X},\tilde{x}_0) & \stackrel{\tilde{\pi}_\ast}{\rightarrow} & 
\pi_1(\tilde{Y},\tilde{y}_0) \rightarrow \\
                  &  \;\;\;\;\;\; \downarrow                    &                                              &   \;\;\;\;\;\;\parallel & 
                   & \;\;\;\;\; \;\downarrow &  & \;\;\;\;\;\;\downarrow \\
\rightarrow & \pi_2^{orb}(Y,y_0) & \stackrel{\delta}{\rightarrow} & \pi_1(F,x_0) &
\stackrel{i_\ast}{\rightarrow} & \pi_1(X,x_0) & \stackrel{\pi_\ast}{\rightarrow} & 
\pi_1^{orb}(Y,y_0) \rightarrow .\\                  
\end{array}
$$
Since $\pi_1(\tilde{X},\tilde{x}_0)\rightarrow \pi_1(X,x_0)$ is injective, it follows that $i_\ast$ is
injective if $\tilde{\delta}$ has zero image.

With the preceding understood, by the Equivariant Sphere Theorem of Meeks and Yau 
(cf. \cite{MY}, p. 480), there are disjoint, embedded $2$-spheres $\tilde{\Sigma}_i$ in $\tilde{Y}$ 
such that the union of $\tilde{\Sigma}_i$ is invariant under the $G$-action and the classes 
of $\tilde{\Sigma}_i$ generate $\pi_2(\tilde{Y})$ as a $\pi_1(\tilde{Y})$-module. Suppose the 
image of $\tilde{\delta}$ is non-zero. Then there is a $\tilde{\Sigma}\in\{\tilde{\Sigma}_i\}$ which
is not in the kernel of $\tilde{\delta}$. It follows that the Euler class of 
$\tilde{\pi}:\tilde{X}\rightarrow \tilde{Y}$ is nonzero on $\tilde{\Sigma}$, and consequently,
$\tilde{\Sigma}$ is a non-separating $2$-sphere in $\tilde{Y}$. It also follows that the Euler class
of $\pi$ is non-torsion, and in this case, we have $b_1(Y)=b_2^{+}+1>1$.

We first consider the case where there are no elements of $G$ which leaves the $2$-sphere 
$\tilde{\Sigma}$ invariant. In this case the image of $\tilde{\Sigma}$ in $Y$ under the quotient 
$\tilde{Y}\rightarrow Y$ is an embedded non-separating $2$-sphere which contains no singular points of $Y$.  By Corollary 2.2, $X$ has vanishing Seiberg-Witten invariant,
a contradiction.

Now suppose that there are nontrivial elements of $G$ which leave $\tilde{\Sigma}$ invariant. We denote by $G_0$ the maximal subgroup of $G$ which leaves $\tilde{\Sigma}$ invariant. We shall 
first argue that the action of $G_0$ on $\tilde{\Sigma}$ is orientation-preserving. Suppose not, and 
let $\tau\in G_0$ be an involution which acts on $\tilde{\Sigma}$ reversing the orientation. Then 
since $\tau$ preserves the Euler class of $\tilde{\pi}$ and reverses the orientation of $\tilde{\Sigma}$, the Euler class of $\tilde{\pi}$ evaluates to $0$ on $\tilde{\Sigma}$, which is a contradiction. Hence 
the action of $G_0$ on $\tilde{\Sigma}$ is orientation-preserving.

Let $\Sigma$ be the image of $\tilde{\Sigma}$ in $Y$. Then $\Sigma$ is a non-separating, 
spherical $2$-suborbifold of $Y$. Consequently $|Y|$ contains a non-separating $2$-sphere,
which, by Corollary 2.2, implies that $X$ has vanishing Seiberg-Witten invariant, a contradiction.

For part (2) of the theorem, we claim that there exist embedded $2$-spheres $C_1,\cdots,C_N$ of 
self-intersection $0$ which generate the image of $\pi_2(X)\rightarrow H_2(X)$. Assume the 
claim momentarily. Since $X$ has $b_2^{+}>1$ and nonzero Seiberg-Witten invariant, a theorem 
of Fintushel and Stern (cf. \cite{FS}) asserts that each $C_i$ must be torsion in $H_2(X)$, from
which part (2) follows.

To see the existence of $C_1,\cdots,C_N$, note that the restriction of $\tilde{\pi}$ over each
$\tilde{\Sigma}_i$ must be trivial from the proof of part (1). We claim that there are sections 
$\Sigma_i$ of $\tilde{\pi}$ over $\tilde{\Sigma}_i$ or a push-off of it, such that no nontrivial 
element of $G$ will leave $\Sigma_i$ invariant. The image of such a $\Sigma_i$ under 
$\tilde{X}\rightarrow X$ is an embedded $2$-sphere of self-intersection $0$,  which will be 
our $C_i$. Since $\tilde{\pi}_\ast: \pi_2(\tilde{X})\rightarrow \pi_2(\tilde{Y})$ is isomorphic and 
$\pi_2(\tilde{X})=\pi_2(X)$, the $2$-spheres $C_i$ generate the image of 
$\pi_2(X)\rightarrow H_2(X)$.

To construct these $\Sigma_i$'s, note that there are three possibilities: (i) no nontrivial element
of $G$ leaves $\Sigma_i$ invariant, (ii) there is a nontrivial $g\in G$ (and no other elements) which
acts on $\Sigma_i$ preserving the orientation, and (iii) there is an involution $\tau\in G$ (and no
other elements) which acts on $\Sigma_i$ reversing the orientation. The existence of $\Sigma_i$
is trivial in case (i). In case (ii), any section of $\tilde{\pi}$ will do because $g$ acts as translations
on the fiber of $\tilde{\pi}$. In case (iii), we take $\Sigma_i$ to be a section of $\tilde{\pi}$ over
a push-off of $\tilde{\Sigma}_i$.  This shows the existence of $\Sigma_i$'s, and the proof of 
Theorem 1.1 is completed.

\vspace{1mm}

\noindent{\bf Proof of Theorem 1.2}

\vspace{1mm}

According to the Enriques-Kodaira classification, $X$ falls into one of the following
classes of surfaces: class $VII$, rational or ruled, elliptic, $K3$, or general type. 

First consider the case where $X$ is a class $VII$ surface. 
Since $b_1=1$, the Euler number of $X$ equals $b_2$. 
This shows that if $X$ is Seifert fibered, $b_2$ must be zero.

Suppose $X$ is of general type or $K3$. Since $c_2(X)>0$ (cf. \cite{BPV}, Theorem 1.1 
in Chapter VII for the case of general type), $X$ can not be Seifert fibered. 

Suppose $X$ is elliptic. Then the Euler number of $X$ is non-negative. 
Hence if $X$ is Seifert fibered, $X$ must be a minimal elliptic surface with Euler number $0$. 
Suppose the monodromy representation of the elliptic fibration is nontrivial. Then $X$ is K\"{a}hler 
(cf. \cite{FM}, Theorem 7.8, p.201), and therefore $z(\pi_1(X))$ is infinite by Theorem 1.1. 
On the other hand, since the monodromy is nontrivial, the image of the $\pi_1$ of a generic fiber 
of the elliptic fibration does not lie in the center $z(\pi_1(X))$, which implies that the base of the 
elliptic fibration must be a nonsingular torus. In this case $X$ is a bi-elliptic surface, which admits another elliptic fibration with trivial monodromy representation.

Finally, let $X$ be rational or ruled. If $X$ is Seifert fibered, then the vanishing of Euler number 
and signature implies that $X$ must be a ruled surface over an elliptic curve. In particular, 
$\pi_1(X)=\Z\times \Z$ and $\pi_2(X)\neq 0$. By the classification result concerning such Seifert fibered $4$-manifolds in \cite{C2}, $X$ is diffeomorphic to $T^2\times \s^2$.

This completes the proof of Theorem 1.2.

\vspace{1mm}

\noindent{\bf Proof of Theorem 1.3}

\vspace{1mm}

Since $T^4$ has $b_2^{+}>1$ and nonzero Seiberg-Witten invariant, any smooth circle action
on $T^4$ must be fixed-point free by Baldridge \cite{Bald3}. Let $\pi: T^4\rightarrow Y$ be
the corresponding Seifert fibration. By Lemma 2.3, $Y$ is pseudo-good, hence there is a 
$3$-manifold $\tilde{Y}$ and a finite group action of $G$ such that $Y=\tilde{Y}/G$. Since
$\pi_1(T^4)=\Z^4$, we see that the center of $\pi_1(\tilde{Y})$ must have rank $3$, and 
consequently, $\tilde{Y}=T^3$. Finally, observe that in this case the $\pi_1^{orb} $ of 
$Y=\tilde{Y}/G=T^3/G$ must have a torsionless center, and $Y=T^3$ must be true. 
It follows that $\pi: T^4\rightarrow Y=T^3$ is a trivial $S^1$-bundle, and the original circle
action is smoothly conjugate to a linear action. This proves Theorem 1.3.

\vspace{1mm}

\noindent{\bf Proof of Proposition 1.8}

\vspace{1mm}

First, every symplectic $4$-manifold has nonzero Seiberg-Witten invariant by work of Taubes
\cite{T1}. More precisely, the Seiberg-Witten invariant associated to the canonical $Spin^c$-structure
$\I\oplus K^{-1}_X$ equals $\pm 1$, where in the case of $b_2^{+}=1$, the convention is to orient
$H^{2,+}(X;\R)$ by the symplectic form. Hence by Lemma 2.3, $Y$ is pseudo-good, and 
there exists a $3$-manifold $\tilde{Y}$ and a finite group action of $G$ such that $Y=\tilde{Y}/G$
(cf. \cite{BLP,MM}). We denote by $\tilde{\pi}:\tilde{X}\rightarrow \tilde{Y}$ the pull-back of $\pi$
via $\tilde{Y}\rightarrow Y$. Then $pr:\tilde{X}\rightarrow X$ is a finite cover of $X$, hence 
$\tilde{X}$ is also symplectic. 

Now suppose $K_X$ is torsion. Then $K_{\tilde{X}}=pr^\ast K_X$ is also torsion, and by 
\cite{FV1}, $\tilde{Y}$ must be a $T^2$-bundle over $\s^1$. 
We will show that there exists a $G$-invariant fibration $\tilde{Y}\rightarrow\s^1$ such 
that the induced action of $G$ on the base $\s^1$ is orientation-preserving. The discussion 
will be according to the following three cases.

{\bf Case 1}: $b_1(\tilde{Y})=1$. In this case, our claim follows from Edmonds-Livingston \cite{EL}, Theorem 5.2, together with Meeks-Scott \cite{MS}, Theorem 8.1, since $b_1(Y)$, which
is nonzero, must also be equal to $1$, and the condition $H_1(\tilde{Y};\Q)^G=\Q$ in
\cite{EL} is verified. 

{\bf Case 2}: $b_1(\tilde{Y})=2$. In this case, $\tilde{Y}$ is a nontrivial $\s^1$-bundle over $T^2$.  Note that $z(\pi_1(\tilde{Y}))$ has rank $1$, so it must be preserved under $G$. 
By Meeks-Scott \cite{MS}, the $G$-action is equivalent to a fiber-preserving $G$-action. 
If the induced action on the base $T^2$ is homologically nontrivial, $Y$ would have $b_1=0$, 
which is a contradiction. Hence the induced action on $T^2$ is homologically trivial, which must be given by translations. In this case, it is easily seen that the $T^2$-bundle over $\s^1$ structure on 
$\tilde{Y}$ is $G$-invariant, with the induced action preserving the orientation of the base $\s^1$.

{\bf Case 3}: $b_1(\tilde{Y})=3$. In this case, $\tilde{Y}=T^3$. By Meeks-Scott \cite{MS}, 
the $G$-action is smoothly conjugate to a linear action. Furthermore, since $Y=\tilde{Y}/G$ has 
only $1$-dimensional singular set, the $G$-action must preserve a product structure 
$T^3=\s^1\times T^2$. This case follows easily. (Note that there is a finite group action on $T^3$ which does not preserve any product structure $T^3=\s^1\times T^2$, cf. Meeks-Scott \cite{MS}, 
p. 291. Of course, such an action has an isolated fixed point.)

\section{Destroying smooth circle actions via knot surgery}

We begin by reviewing the knot surgery and the knot surgery formula for Seiberg-Witten invariants,
following Fintushel-Stern \cite{FinS}. To this end, let $M$ be a smooth $4$-manifold with 
$b_2^{+}>1$,
which possesses an essential embedded torus $T$ of self-intersection $0$. Given any knot 
$K\subset\s^3$, the knot surgery of $M$ along $T$ with knot $K$ is the smooth $4$-manifold, 
denoted by $M_K$, which is constructed as follows:
$$
M_K\equiv M\setminus Nd(T)\cup_\phi (\s^3\setminus Nd(K))\times \s^1,
$$ 
where the only requirement on the identification $\phi$ is that it sends the meridian of $T$ to the 
longitude of $K$. It follows easily from the Mayer-Vietoris sequence that the integral homology
of $M_K$ is naturally identified with the integral homology of $M$, under which the intersection pairings on $M$ and $M_K$ agree. (In \cite{FinS}, it is assumed that $M$ is simply connected, 
$T$ is $c$-embedded, and $\pi_1(M\setminus T)$ is trivial. These assumptions are irrelevant to 
the discussions here.)

An important aspect of knot surgery is that the Seiberg-Witten invariant of $M_K$ can be computed
from that of $M$ and the Alexander polynomial of $K$, through the so-called knot surgery formula.
In order to state the formula, we let 
$$
\underline{SW}_M=\sum_z (\sum_{c_1(\L)=z} SW_M (\L)) t_z, \;\;\; z\in H^2(M;\Z),
$$
where $t_z\equiv \exp(z)$ is a formal variable, regarded as an element of the group ring 
$\Z H^2(M;\Z)$.

\begin{theorem}
{\em(}Knot Surgery Formula \cite{FinS}{\em)}
With the integral homology of $M$ and $M_K$ naturally identified, one has
$$
\underline{SW}_{M_K}=\underline{SW}_M \cdot \Delta_K(t),
$$
where $t=\exp(2[T])$. Here $\Delta_K(t)$ is the Alexander polynomial of $K$, and $[T]$ stands
for the Poincar\'{e} dual of the $2$-torus $T$. 
\end{theorem}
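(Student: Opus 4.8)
The plan is to reduce the computation to a gluing formula along the $3$-torus $N=\partial Nd(T)$ and to identify the local contribution of the knot exterior with the Alexander polynomial. Observe first that both $M$ and $M_K$ contain the common piece $M\setminus Nd(T)$, whose boundary is the $3$-torus $N$; they differ only in how $N$ is filled. The manifold $M$ corresponds to filling by $Nd(T)=D^2\times T^2$, which is exactly the knot surgery performed with the unknot $U$ (so that $M=M_U$ and $\Delta_U(t)=1$, since $\s^3\setminus Nd(U)$ is a solid torus and $(\s^3\setminus Nd(U))\times\s^1=D^2\times T^2$), whereas $M_K$ is obtained by gluing in $(\s^3\setminus Nd(K))\times\s^1$. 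Thus the formula should emerge as the statement that the ratio of the two fillings' contributions is precisely $\Delta_K(t)$.

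First I would set up the product formula of Taubes \cite{T2} along $N$. Since $N$ is an essential $3$-torus splitting $M_K$ (exactly the situation exploited in Lemma 2.1), $SW_{M_K}(\L)$ is expressible as a pairing of relative Seiberg-Witten invariants of the two pieces $M\setminus Nd(T)$ and $(\s^3\setminus Nd(K))\times\s^1$, summed over the matchings of $Spin^c$-structures across $N$. Carrying out the identical decomposition for $M$ itself shows that $\underline{SW}_M$ is the same pairing, but with the knot exterior replaced by the solid-torus filling. Hence the entire difference between $\underline{SW}_{M_K}$ and $\underline{SW}_M$ is concentrated in the relative invariant of the factor $(\s^3\setminus Nd(K))\times\s^1$ versus that of $D^2\times\s^1\times\s^1$.

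Next I would compute that relative invariant. As a product, the Seiberg-Witten theory of $(\s^3\setminus Nd(K))\times\s^1$ is governed by the $3$-dimensional Seiberg-Witten invariant of the knot exterior $\s^3\setminus Nd(K)$, and by the theorem of Meng and Taubes identifying the $3$-dimensional invariant with the Milnor-Turaev torsion, this equals the symmetrized Alexander polynomial $\Delta_K$. The homological variable recording the $Spin^c$-grading along $N$ is the class Poincar\'{e} dual to $T$; tracking it through the meridian-longitude identification prescribed by $\phi$, together with the fact that distinct $Spin^c$-structures differ in $c_1$ by \emph{twice} a class dual to $T$, matches this variable to $t=\exp(2[T])$. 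Since the unknot filling contributes $1$, substituting back into the pairing yields $\underline{SW}_{M_K}=\underline{SW}_M\cdot\Delta_K(t)$.

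The main obstacle I anticipate is making the gluing precise over $N=T^3$. Because $b_1(T^3)=3$, the translation-invariant solutions on $T^3\times\R$ are reducible and occur in a positive-dimensional family, and the hypothesis $b_2^{+}(M)>1$ is needed precisely to avoid wall-crossing ambiguities; the neck-stretching and perturbation analysis underlying the product formula must be organized so that only the torsion information of the knot exterior survives in the limit. A secondary but genuine difficulty is the bookkeeping of the homological identifications, so that the Alexander variable lands exactly on $\exp(2[T])$ rather than an ambiguous power or sign, which forces one to fix orientations and the $Spin^c$-labelling consistently on both sides of $N$. Should the direct gluing prove too delicate, an alternative is an inductive skein argument: realize the unknotting sequence relating $K_{+}$, $K_{-}$, and $K_0$ by Dehn surgeries on an auxiliary curve in the knot exterior, establish a surgery relation among $\underline{SW}_{M_{K_{+}}}$, $\underline{SW}_{M_{K_{-}}}$, and $\underline{SW}_{M_{K_0}}$ mirroring the skein relation $\Delta_{K_{+}}-\Delta_{K_{-}}=(t^{1/2}-t^{-1/2})\Delta_{K_0}$, and induct with the unknot as base case.
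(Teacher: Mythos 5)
You should first be aware that the paper contains no proof of this statement: Theorem 3.1 is imported from Fintushel--Stern \cite{FinS}, and the only surrounding content is the remark that the $c$-embeddedness hypothesis of the original paper is inessential and can be removed (attributed to Fintushel \cite{F3}). So there is no in-paper argument to compare against; the comparison must be with the published proof. Measured against that, your ``fallback'' is in fact essentially the actual Fintushel--Stern argument: they induct over an unknotting sequence, using the Morgan--Mrowka--Szab\'{o} gluing theorem relating the Seiberg--Witten invariants of the $0$-, $\pm 1$-, and $\infty$-fillings of a torus to mirror the skein relation $\Delta_{K_{+}}-\Delta_{K_{-}}=(t^{1/2}-t^{-1/2})\Delta_{K_0}$, with the unknot as base case. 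One caveat you should absorb before promoting this to your main line: $K_0$ is a two-component link, so the induction cannot stay within knots; Fintushel--Stern must run the argument through a class of links, which makes the induction noticeably more involved than the one-sentence version you give.

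Your primary route (Taubes's product formula in \cite{T2} along $N=T^3$, plus Meng--Taubes) is a legitimate alternative strategy, but as sketched it has two concrete gaps. First, the assertion that the knot-exterior piece contributes exactly $\Delta_K$ while the solid-torus filling $D^2\times T^2$ contributes exactly $1$ is not correct at face value: by Meng--Taubes the relevant $3$-dimensional invariant is the Milnor--Turaev torsion, which for the knot exterior (or the $0$-surgery) differs from $\Delta_K(t)$ by normalizing factors of the form $(t^{1/2}-t^{-1/2})^{\pm 1}$ or $(t^{1/2}-t^{-1/2})^{\pm 2}$, and the whole content of the gluing computation is to show these factors cancel against the corresponding contribution of the trivial filling. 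Your sketch treats this cancellation as automatic, which is precisely where a casual gluing argument fails or produces the wrong power of $(t^{1/2}-t^{-1/2})$. Second, Taubes's Theorem 2.7 expresses $SW$ as an infinite sum over matchings of $Spin^c$-structures across $N$, and extracting the clean product $\underline{SW}_M\cdot\Delta_K(t)$ requires organizing this sum (finiteness of contributing basic classes, consistent relative orientations) rather than merely anticipating the difficulty, as your final paragraph does. Your identification $t=\exp(2[T])$, via the fact that $c_1$ of $Spin^c$-structures varies by even classes dual to $T$, is correct.
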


\noindent{\bf Remarks:}\hspace{1mm}
(1) The knot surgery formula was originally proved in \cite{FinS} under the assumption that 
$T$ is $c$-embedded. However, the assumption of $c$-embeddedness of $T$ is not essential
in the argument and it may be removed (cf. Fintushel \cite{F3}). 

(2) We note that in Theorem 1.5, the condition $SW_X(z)\equiv \sum_{c_1(\L)=z} SW_X(\L)\neq 0$
for some $z\in H^2(X;\Z)$ is equivalent to $\underline{SW}_X\neq 0$, which implies, by 
Theorem 3.1, that $\underline{SW}_{X_K}\neq 0$ for any knot $K$. Consequently, $X_K$ has nonzero Seiberg-Witten invariant. Moreover, if $H^2(X;\Z)$ has no $2$-torsions, so does 
$H^2(X_K;\Z)$, and in this case, $X$ and $X_K$ have the same Seiberg-Witten invariant 
provided that the Alexander polynomial of $K$ is trivial.

\vspace{2mm}

Now we give a proof for Theorem 1.5. To this end, we introduce the following
notations. Let $Y_1\equiv Y\setminus Nd(\pi(l))$ and 
$X_1\equiv X\setminus Nd(T)$, where $T$ is the $2$-torus $T=\pi^{-1}(\pi(l))$, such
that $X_1$ is the restriction of $\pi$ to the $3$-orbifold $Y_1$. 
Note that the boundary $\partial X_1$ is a $3$-torus $T^3$. We fix three embedded loops in 
$\partial X_1$: $m$, a meridian of $T$, $h$, a fiber of $\pi$, and $l^\prime$, a push-off of $l$, 
which all together generate $\pi_1(\partial X_1)$. We assume $m$, $l^\prime$ are sections over 
$\pi(m)$, $\pi(l^\prime)$ respectively. Finally, recall that the underlying manifolds of $Y_1$, $Y$ are
denoted by $|Y_1|$, $|Y|$ respectively. 

\begin{lemma}
The map $i_\ast:\pi_1(\partial X_1)\rightarrow \pi_1(X_1)$ induced by the inclusion is injective.
\end{lemma}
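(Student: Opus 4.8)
The plan is to exploit the Seifert (circle-orbifold-bundle) structure of the restricted fibration $X_1\to Y_1$ and reduce this three-dimensional injectivity statement to a two-dimensional one about the base orbifold. Write $h$ for the regular fiber and $\mu=\pi_\ast(m)$, $\lambda=\pi_\ast(l')$ for the images of $m,l'$ in $\pi_1^{orb}(Y_1)$, and use the central extension
$$1\to\langle h\rangle\to\pi_1(X_1)\stackrel{\pi_\ast}{\rightarrow}\pi_1^{orb}(Y_1)\to 1$$
coming from the homotopy exact sequence of the Seifert fibration. The reduction I would use is this: it suffices to show (a) $h$ is central of infinite order in $\pi_1(X_1)$, and (b) $\langle\mu,\lambda\rangle\cong\Z^2$ inside $\pi_1^{orb}(Y_1)$. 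Indeed, since $\pi_1(\partial X_1)=\Z^3$ is abelian, the images of $m,h,l'$ commute pairwise in $\pi_1(X_1)$, so every element of $i_\ast\pi_1(\partial X_1)$ has the form $m^ah^bl'^c$; applying $\pi_\ast$ kills $h$ and gives $\mu^a\lambda^c=1$, whence $a=c=0$ by (b), and then $h^b=1$ forces $b=0$ by (a).

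For (a), the inclusion $X_1\hookrightarrow X$ sends $h$ to a regular fiber of $\pi$, which by Theorem 1.1(1) lies in $z(\pi_1(X))$ and has infinite order; hence $h$ has infinite order in $\pi_1(X_1)$, and it is central as the class of the $\s^1$-orbit. This also identifies $\langle h\rangle$ as precisely the kernel of $\pi_\ast$.

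The heart of the matter is (b), i.e. that the boundary torus $\partial Y_1=T^2$, spanned by the meridian $\mu$ and the longitude $\lambda=[\pi(l)]$, is incompressible in the $3$-orbifold $Y_1=Y\setminus Nd(\pi(l))$. First, $\lambda$ has infinite order in $\pi_1^{orb}(Y)$: otherwise $\lambda^k=1$ for some $k\neq 0$ would give $[l]^k\in\ker(\pi_\ast)=\langle h\rangle\subseteq z(\pi_1(X))$, i.e. a nonzero power of $[l]$ lying in the center, which condition (ii) forbids; mapping $\pi_1^{orb}(Y_1)\to\pi_1^{orb}(Y)$ then shows $\lambda$ has infinite order in $\pi_1^{orb}(Y_1)$ as well. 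Now if $\partial Y_1$ were compressible, an embedded compressing disk would have boundary a primitive class $\mu^p\lambda^q$. If $q\neq 0$, van Kampen for $Y=Y_1\cup_{T^2}Nd(\pi(l))$ kills $\mu$ and yields $\lambda^q=1$ in $\pi_1^{orb}(Y)$, contradicting the infinite order of $\lambda$. If $q=0$, then $p=\pm 1$ and $\mu$ bounds a disk $D$ in $Y_1$; capping $D$ off with the meridian disk of $Nd(\pi(l))$ produces a $2$-sphere meeting $\pi(l)$ exactly once, hence a non-separating sphere in $|Y|$. By Corollary 2.2 this forces $SW_X=0$, contradicting the hypothesis of nonzero Seiberg--Witten invariant.

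The main obstacle is the meridional case, where the compressing disk $D$ may intersect the one-dimensional singular set of $Y_1$, so that $D$ together with the meridian disk need not be an honestly embedded $2$-sphere in $|Y|$. To handle this cleanly I would pass to the finite manifold cover $Y=\tilde{Y}/G$ furnished by Lemma 2.3 (and geometrization): since $\pi(l)$ lies in the regular part, its preimage in $\tilde{Y}$ is a disjoint union of genuine loops bounded by genuine tori, and the compression yields a truly embedded non-separating sphere in the $3$-manifold $\tilde{Y}$, to which the reducibility/Seiberg--Witten obstruction applies. Finally, $\pi_1$-injectivity descends from $\tilde{X}_1$ to $X_1$ precisely because $\pi_1(\partial X_1)=\Z^3$ is torsion-free: any class dying in $\pi_1(X_1)$ has a power lying in the finite-index subgroup carried by a component of $\partial\tilde{X}_1$, which injects, forcing the class itself to be trivial.
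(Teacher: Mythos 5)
Your core argument coincides with the paper's proof: you reduce to the base via the central extension and the fact (Theorem 1.1) that the fiber class $[h]$ has infinite order, you compress $\partial Y_1$ using the Loop Theorem, and you split into the longitudinal case (ruled out by condition (ii), since a power of $[l]$ would land in $\langle [h]\rangle\subseteq z(\pi_1(X))$) and the meridional case (ruled out by Corollary 2.2 via a non-separating sphere meeting $\pi(l)$ once). Up to that point your proof is correct and is essentially the paper's.

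The problem is your final paragraph. The ``main obstacle'' you identify is not an obstacle: the Loop Theorem is applied in the underlying \emph{manifold} $|Y_1|$, so the compressing disk $D$ is embedded there regardless of whether it meets the singular link, and $D$ capped off with the meridian disk of $Nd(\pi(l))$ is an honestly embedded $2$-sphere in the manifold $|Y|$ meeting $\pi(l)$ in one point, hence non-separating. Corollary 2.2 asks for nothing more than a non-separating $2$-sphere in $|Y|$; the entire point of the de-singularization formula of \cite{C1}, which powers that corollary, is that the Seiberg--Witten obstruction lives on the underlying manifold and tolerates spheres passing through singular points. Worse, the patch you bolt on has genuine gaps of its own. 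First, if $D$ meets the singular set, its preimage in $\tilde{Y}$ is a \emph{branched} cover of a disk, not a union of disks, so ``the compression yields a truly embedded non-separating sphere in $\tilde{Y}$'' does not follow as stated; you would have to rerun the whole Loop Theorem argument upstairs, including reproving the analogue of condition (ii) for the lifts of $l$. Second, and fatally, the ``Seiberg--Witten obstruction'' in the cover requires knowing that $\tilde{X}$ (equivalently, the relevant torsion invariant of $\tilde{Y}$) has nonvanishing invariant, and nonvanishing of Seiberg--Witten invariants is not known to pass to finite covers; nothing in the hypotheses supplies it. This is exactly why the paper transfers nonvanishing from $X$ to the orbifold $Y$ and then to $|Y|$ (Lemma 2.1 plus the de-singularization formula), never to $\tilde{Y}$. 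Deleting your last paragraph and invoking Corollary 2.2 directly, as the paper does, completes the proof.
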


\begin{proof}
Suppose to the contrary that $i_\ast:\pi_1(\partial X_1)\rightarrow \pi_1(X_1)$ has a nontrivial 
kernel. Then $\pi_1(\partial Y_1)\rightarrow \pi_1(|Y_1|)$ must also have a nontrivial kernel. To see this, suppose $\gamma\neq 0$ lies in the kernel of $i_\ast$. Then $\pi_\ast (\gamma)$ lies in the kernel of $\pi_1(\partial Y_1)\rightarrow \pi_1^{orb}(Y_1)$. On the other hand, if 
$\pi_\ast (\gamma)=0$, then $\gamma$ is a multiple of $[h]$, which is zero in $\pi_1(X_1)$
by assumption. This contradicts to the fact that $[h]$ has infinite order in $\pi_1(X)$ 
(cf. Theorem 1.1).  Hence $\pi_1(\partial Y_1)\rightarrow \pi_1^{orb}(Y_1)\rightarrow \pi_1(|Y_1|)$ 
has a nontrivial kernel. 

Now by the Loop Theorem, $\partial Y_1$ is compressible in $|Y_1|$. This means that there is an embedded disc $D\subset |Y_1|$ such that (1) $D\cap \partial Y_1=\partial D$, (2) $\partial D$ is a homotopically nontrivial simple closed loop in $\partial Y_1$. We claim that $\partial D$ must be a copy of the meridian $\pi(m)$ of $\pi(l)$ in $Y$. 

To see this, write $[\partial D]=s \cdot [\pi(m)] +t \cdot [\pi(l^\prime)]$ in $\pi_1(\partial Y_1)$.
If $t\neq 0$, then $[\pi(l)]$ has finite order in $\pi_1^{orb}(Y)$. This implies that a nontrivial 
power of $[l]$ lies in the subgroup generated by $[h]$, which lies in $z(\pi_1(X))$, a contradiction 
to (ii). Hence $t=0$, and $s=1$ with $[\partial D]=[\pi(m)]$, so that $\partial D$ is isotopic to
$\pi(m)$. 

With the preceding understood, the median $\pi(m)$ bounds an embedded disk 
$D\subset |Y_1|$. It follows that there is a non-separating $2$-sphere in $|Y|$ intersecting 
with $\pi(l)$ in exactly one point. By Corollary 2.2, $X$ has vanishing Seiberg-Witten invariant, 
a contradiction. 

\end{proof}

\noindent{\bf Proof of Theorem 1.5}

\vspace{2mm}

Since $K$ is a nontrivial knot, $\pi_1(\partial((\s^3\setminus Nd(K))\times \s^1))\rightarrow \pi_1((\s^3\setminus Nd(K))\times \s^1)$ is injective. With Lemma 3.2, this implies that 
$\pi_1(X_K)$ is an amalgamated free product 
$$
\pi_1(X_K)= \pi_1(X_1) \ast _{\pi_1(T^3)} \pi_1((\s^3\setminus Nd(K))\times \s^1),
$$
where $\pi_1(T^3)=\pi_1(\partial X_1)=\pi_1(\partial ((\s^3\setminus Nd(K))\times \s^1))$. 

Since $K$ is not a torus knot, the knot group of $K$ has trivial center (cf. \cite{Rolf}). This implies
that the center of $\pi_1((\s^3\setminus Nd(K))\times \s^1)$ is generated by the class of 
$\{pt\}\times \s^1$. On the other hand, by the construction of $X_K$, $\{pt\}\times \s^1$ is identified 
with a push-off of $l$, and furthermore, by condition (ii) on $l$, no nontrivial powers of the homotopy class of $l$ is contained in the center $z(\pi_1(X))$. Consequently, $z(\pi_1(X_K))$ is trivial. By 
Theorem 1.1 (together with Baldridge \cite{Bald3}, Theorem 1.1), $X_K$ does not support any 
smooth circle actions.

\vspace{2mm}

\vspace{2mm}

{\Small University of Massachusetts, Amherst.\\
{\it E-mail:} wchen@math.umass.edu

\end{document}